\newtheorem{theorem}{Theorem}[section]
\newtheorem{proposition}[theorem]{Proposition}
\newtheorem{lemma}[theorem]{Lemma}
\theoremstyle{definition}
\newtheorem{definition}[theorem]{Definition}
\newtheorem{example}[theorem]{Example}
\newtheorem{remark}[theorem]{Remark}
\newcommand{\R}{\mathbb{R}}  
\newcommand{\N}{\mathbb{N}}  
\numberwithin{equation}{section}
\journal{~}
\begin{document}

\begin{frontmatter}

\title{Structure for $g$-Metric Spaces and Related Fixed Point Theorems }

\author[label1]{Hayoung Choi\corref{corr1}}
\cortext[corr1]{Corresponding author}
\ead{hayoung79choi@gmail.com}
\address[label1]{School of Information Science and Technology, ShanghaiTech University, Pudong district, Shanghai 200031, China}

\author{Sejong Kim \fnref{label2}}
\ead{skim@chungbuk.ac.kr}
\address[label2]{Department of Mathematics, Chungbuk National University, Cheongju 361-763, Republic of Korea}

\author{Seung Yeop Yang \fnref{label3}}
\ead{seungyeop.yang@du.edu}
\address[label3]{Department of Mathematics, University of Denver, Denver, CO 80208, USA}

\begin{abstract}
In this paper, we propose a generalized notion of a distance function, called a $g$-metric.
The $g$-metric with degree $n$ is a distance of $n+1$ points, generalizing the ordinary distance between two points and $G$-metric between three points. Indeed, it is shown that the $g$-metric with degree 1 (resp. degree 2) is equivalent to the ordinary metric (resp. the $G$-metric).
Fundamental properties and several examples for the $g$-metric are also given.
Moreover, topological properties on the $g$-metric space including the convergence of sequences and the continuity of mappings on the $g$-metric space are studied. Finally, we generalize some well-known fixed point theorems including Banach contraction mapping principle and \'Ciri\'c fixed point theorem in the $g$-metric space.
\end{abstract}

\begin{keyword}
generalized metric, g-metric, G-metric, distance function, fixed point
\end{keyword}

\end{frontmatter}


\section{Introduction and preliminaries}
A distance is a measurement how far apart each pair elements of a given set are.
The distance function in mathematics and many other scientific fields is a crucial concept. For instance,
the distance function is used to quantify a dissimilarity (or equivalently similarity) between two objects in some sense.
However, due to massive and complicated data sets today, the definition of a distance function is required to be generalized.

Numerous ways to generalize the notion of a distance function have been studied \cite{AKOH15,Khamsi2015}.
Among them, we consider the $G$-metric space, which allows us to establish many topological properties. Moreover, a variety of the fixed point theorems on the $G$-metric space have extensively been studied.
In this paper we give a generalized notion of a distance function between $n+1$ points, called a $g$-metric. It coincides with the ordinary distance between two points and with the $G$-metric between three points. Furthermore, we establish topological notions and properties on the $g$-metric space including the convergence of sequences and continuity of mappings. From these topology on the $g$-metric space we generalize some well-known fixed point theorems such as the Banach contraction mapping principle, weak contraction mapping principle, and \'Ciri\'c fixed point theorem.

Let $\N$ (resp. $\R$) be the set of all nonnegative integers (resp. all real numbers), and let $\R$ be the set of all real numbers. We denote as $\R_{+}$ the set of all nonnegative real numbers.
For a finite set $A$, we denote the number of distinct elements of $A$ by $n(A)$.

The definition of a distance function was proposed by M. Fr\'{e}chet \cite{Frechet06} in 1906.
\begin{definition} \label{D:distance}
Let $\Omega$ be a nonempty set. A function $d: \Omega \times \Omega \longrightarrow \R_{+}$ is
called a \emph{distance function} or \emph{metric} on $\Omega$ if it satisfies the following conditions:
\begin{itemize}
    \item[(1)] (identity) $d(x,y)=0$ if $x=y$,
    \item[(2)] (non-negativity) $d(x,y)>0$ if $x \neq y$,
    \item[(3)] (symmetry) $d(x,y) = d(y,x)$ for all $x, y \in \Omega$,
    \item[(4)] (triangle inequality) $d(x,y) \leq d(x,z) + d(z,y)$ for all $x, y, z \in \Omega$.
\end{itemize}
The pair $(\Omega, d)$ is called a \emph{metric space}.
\end{definition}

The first attempt to generalize the ordinary distance function to a distance of three points was introduced by Gahler \cite{Gahler63, Gahler66} in 1963.
\begin{definition} \label{D:Gahler}
Let $\Omega$ be a nonempty set. A function $d:\Omega\times \Omega \times \Omega \longrightarrow \R_+$ is
called a \emph{2-metric} on $\Omega$ if it satisfies the following properties:
\begin{itemize}
    \item[$(m1)$] For all $x,y \in \Omega$ with $x \neq y$, there is $z\in \Omega$ such that $d(x,y,z) \neq 0$,
    \item[$(m2)$] $d(x,y,z)=0$ if $x=y,$ $y=z,$ or $z=x,$
    \item[$(m3)$] $d(x,y,z) = d(x,z,y) = \cdots$ (symmetry in all three variables),
    \item[$(m4)$] $d(x,y,z) \leq d(x,y,w) + d(x,w,z) + d(w,y,z)$ for all $x, y, z, w \in \Omega$.
\end{itemize}
The pair $(\Omega, d)$ is called a \emph{2-metric space}.
\end{definition}

\begin{example}\label{ex:2-metric}
We give a few examples of a $2$-metric.
\begin{itemize}
\item[(1)] Let $d(x,y,z)$ be the area of the triangle with vertices at $x,y,z\in \R^2$. Then $d$ is a 2-metric on $\R^{2}$ \cite{MS06}.
\item[(2)] For a given metric $\delta$ on a set $\Omega$ with $n(\Omega) \geq 3$, define $d: \Omega \times \Omega \times \Omega \longrightarrow \R_+$ given by
$$ d(x,y,z) = \min\{ \delta(x,y), \delta(y,z), \delta(z,x) \}, $$
for all $x,y,z\in \Omega$.
Then $(\Omega, d)$ is a 2-metric space. The proof is left to the reader.

\item[(3)] Let $\Omega$ be a set with $n(\Omega) \geq 3$. Define $d: \Omega \times \Omega \times \Omega \longrightarrow \R_+$ given by
\begin{displaymath}
d(x,y,z) =
\begin{cases}
    0, & \hbox{if $x=y,$ $y=z,$ or $z=x$;}\\
    1, & \hbox{otherwise.}
\end{cases}
\end{displaymath}
Then, it is easy to check that $d$ is a 2-metric on $\Omega$.
\end{itemize}
\end{example}

It was shown that a 2-metric is not a generalization of the usual notion of a metric
\cite{HCW88}. Dhage in 1992 introduced a new class of generalized metrics called D-metrics  \cite{dhage92}.

\begin{definition}
Let $\Omega$ be a nonempty set. A function $D: \Omega \times \Omega \times \Omega \longrightarrow \R_+$ is
called a \emph{D-metric} on $\Omega$ if it satisfies the following conditions:
\begin{itemize}
    \item[(D1)] $D(x,y,z)=0$ if and only if $x=y=z$,
    \item[(D2)] $D(x,y,z) = D(x,z,y) =\cdots $ (symmetry in all three variables),
    \item[(D3)] $D(x,y,z) \leq D(x,y,w) + D(x,w,z) + D(w,y,z)$ for all $x,y,z,w\in \Omega,$
    \item[(D4)] $D(x, y, y) \leq D(x, z, z) + D(z, y, y)$ for all $x, y, z \in \Omega$.
\end{itemize}
The pair $(\Omega,D)$ is called a \emph{D-metric space}.
\end{definition}

\begin{example}\label{example:D-metric}
For any given metric space $(\Omega, \delta)$ the following are $D$-metrics \cite{dhage92}.
\begin{itemize}
    \item[(1)] $\displaystyle D(x, y, z) = \frac{1}{3} (\delta(x, y) + \delta(y, z) + \delta(x, z)).$
    \item[(2)] $D(x, y, z) = \max\{ \delta(x, y), \delta(y, z), \delta(x, z) \}.$
\end{itemize}
\end{example}

Topological structures and fixed points in a $D$-metric space have been studied.
However, several errors for fundamental topological properties in a $D$-metric space were found \cite{MS03,NRS05}.
Due to these considerations, Mustafa and Sims \cite{MS06} proposed a more appropriate notion of a generalized metric space.
For more information, see \cite{AKOH15} and references therein.

\begin{definition}
Let $\Omega$ be a nonempty set. A function $G:\Omega\times \Omega \times \Omega \longrightarrow \R_+$ is
called a \emph{G-metric} on $\Omega$ if it satisfies the following conditions:
\begin{itemize}
    \item[(G1)] $G(x,y,z)=0$ if $x=y=z$,
    \item[(G2)] $G(x,x,y) > 0$ for all $x,y \in \Omega$ with $x \neq y$,
    \item[(G3)] $G(x, x, y) \leq G(x, y, z)$ for all $x, y, z \in \Omega$ with $y \neq z$,
    \item[(G4)] $G(x,y,z) = G(x,z,y) = \cdots $ (symmetry in all three variables $x,y,z$),
    \item[(G5)] $G(x,y,z) \leq G(x,w,w) + G(w,y,z)$ for all $x,y,z,w \in \Omega$.
\end{itemize}
The pair $(\Omega,G)$ is called a \emph{G-metric space}.
A $G$-metric space $(\Omega,G)$ is said to be \emph{symmetric} if
\begin{itemize}
    \item[(G6)] $G(x,y,y) = G(x,x,y)$ for all $x,y\in \Omega$.
\end{itemize}
\end{definition}

\begin{example}
The following are G-metrics.
\begin{itemize}
\item[(1)] Let $d(x,y,z)$ be the perimeter of the triangle with vertices at $x,y,z$ on $\R^2$. Then $(\R^2,d)$ is a $G$-metric space. More generally, for a given normed space $(\Omega, \| \cdot \|)$, define
$$ d(x,y,z)= c(\|x-y \|+ \|y-z \| + \|z-x \|) $$
for $x,y,z \in \Omega$ with a fixed $c>0$. Then by Theorem \ref{thm:relation} (2) and Example \ref{Ex:average g-metric} it follows that $d$ is a $G$-metric.
\item[(2)] Let $\Omega = \{ x,y \}$ and let $G(x,x,x)=G(y,y,y)=0$, $G(x,x,y)=1,G(x,y,y)=2$ and extend $G$ to all of $\Omega\times \Omega \times \Omega$ by symmetry in the variables. Then $G$ is a $G$-metric which is not symmetric  \cite{MS06}.
\item[(3)] By Theorem \ref{thm:relation} (2) and Example \ref{Ex:max g-metric}, it is clear that Example \ref{example:D-metric} (2) is a $G$-metric.
\end{itemize}
\end{example}

\section{Theory of a $g$-metric}

Now we propose a new definition of a generalized metric for $n$ points instead of two or three points in  a given set.
For a set $\Omega$, we denote $\displaystyle \Omega^n := \prod_{i=1}^n \Omega.$
\begin{definition}\label{D:g-metric space}
Let $\Omega$ be a nonempty set. A function $g:\Omega^{n+1} \longrightarrow \R_+$ is
called a \emph{$g$-metric with order $n$}
on $\Omega$ if it satisfies the following conditions:
\begin{itemize}
    \item[$(g1)$](positive definiteness) $g(x_0, \ldots, x_{n})=0$ if and only if $x_0 = \cdots = x_{n}$,
    \item[$(g2)$](permutation invariancy) $g(x_0, \ldots, x_{n}) = g(x_{\sigma(0)}, \ldots, x_{\sigma(n)})$ for any permutation $\sigma$ on $\{ 0, 1, \ldots, n \}$,
    \item[$(g3)$](monotonicity) $g(x_0, \ldots, x_{n}) \leq g(y_0, \ldots, y_{n})$ for all $(x_0, \ldots, x_{n}), (y_0, \ldots, y_{n}) \in \Omega^{n+1}$ with $\{ x_i : i = 0, \ldots, n \} \subsetneq \{ y_i : i = 0, \ldots, n \}$,
    \item[$(g4)$](triangle inequality) for all $x_0, \ldots, x_s, y_0, \ldots, y_t, w \in \Omega$ with $s + t + 1 = n$
$$ g(x_0, \ldots, x_s, y_0, \ldots, y_t) \leq g(x_0, \ldots, x_s, w, \ldots, w) + g(y_0, \ldots, y_t, w, \ldots, w). $$
\end{itemize}
The pair $(\Omega, g)$ is called a \emph{$g$-metric space}.
\end{definition}

\begin{definition}
A $g$-metric on $\Omega$ is called \emph{multiplicity-independent} if the following holds
$$ g(x_0, \ldots, x_{n}) = g(y_0, \ldots, y_{n}) $$
for all $(x_0, \ldots, x_{n}), (y_0, \ldots, y_{n}) \in \Omega^{n+1}$
with $\{ x_i : i=0, \ldots, n \} = \{y_i : i=0, \ldots, n \}$.
\end{definition}

Note that for a given multiplicity-independent $g$-metric with order 2, it holds that $g(x,y,y)=g(x,x,y)$.
For a given multiplicity-independent $g$-metric with order 3, it holds that $g(x,y,y,y) = g(x,x,y,y) = g(x,x,x,y)$ and $g(x,x,y,z) = g(x,y,y,z) = g(x,y,z,z)$. This is why we call it multiplicity-independent rather than symmetric  which was proposed in the $G$-metric.

\begin{remark}
If we allow equality under the condition of monotonicity in Definition \ref{D:g-metric space}, i.e., ``$g(x_0, \ldots, x_{n}) \leq g(y_0, \ldots, y_{n})$ for all $(x_0, \ldots, x_{n}), (y_0, \ldots, y_{n}) \in \Omega^{n+1}$ with $\{ x_i : i = 0, \ldots, n \} \subseteq \{ y_i : i = 0, \ldots, n \}$", then every $g$-metric becomes multiplicity-independent.
\end{remark}

Let us explain why the condition $(g4)$ can be considered as a generalization of the triangle inequality.
Recall that the triangle inequality condition for a distance function $d$ is $d(x,y) \leq d(x,z) + d(z,y)$ for all $x,y,z$.
\begin{center}
\begin{tikzpicture}[
    dot/.style={draw, circle, inner sep=1pt, fill=black},
    group/.style={draw=#1, ellipse, ultra thick, minimum width=4cm, minimum height=1.25cm}
    ]
    \node[dot,label=below left:$x$] at (0,-0.2) (x3) {};
    \node[dot,white] at (1,-0.2) (x4) {};
    \node[dot,label=below right:$y$] at (2,-0.2) (x5) {};
    \node[dot,white] at (0.5,1) (x2) {};
    \node[dot,white] at (1.5,1) (x6) {};
    \node[dot,label=$w$] at (1,2) (x1) {};
    \node[group=black, label={[black]below:}] at (x4) (E2) {};
    \node[group=black, label={[black]above right:}, rotate=63] at (x2) (E3) {};
    \node[group=black, label={[black]above left:}, rotate=-63] at (x6) (E1) {};
\end{tikzpicture}
\end{center}
The point $w$ is required to measure approximately the distance between $x$ and $y$ with the distances between $x$ and $w$ and between $w$ and $y$. Note that one cannot measure the distance between $x$ and $y$ by the distances $d(x,w_1)$ and $d(y,w_2)$ with $w_1 \neq w_2$.
Consider $d(x,y)$ as a dissimilarity between $x$ and $y$. Clearly, if $x=y$, then the dissimilarity is 0, vice versa. Also, the dissimilarity between $x$ and $y$ is same as the dissimilarity between $y$ and $x$. If $x$ (resp. $y$) and $z$ (resp. $z$) are sufficiently similar, then by the triangle inequality $x$ and $y$ must be sufficiently similar.

In the similar way, one can generalize the definition of triangle inequality for the $g$-metric. Specifically,
one can see from the definition of triangle inequality for the $g$-metric that
if both $g(x_0, \ldots, x_s, w, \ldots, w)$ and $g(y_0, \ldots, y_t, w, \ldots, w)$ are sufficiently small, then
$g(x_0, \ldots, x_s, y_0, \ldots, y_t)$ must be sufficiently small.
That is, the higher similarities two data sets $\{ x_0, \ldots, x_s, w \}$ and $\{ y_0, \ldots, y_t, w \}$ have, the higher similarity data set
$ \{ x_0, \ldots, x_s, y_0, \ldots, y_t \} $ does. Note that $w$ is a necessary point to combine information about similarity for each data set.

The following theorem shows us that $g$-metrics generalize the notions of ordinary metric and $G$-metric.

\begin{theorem}\label{thm:relation}
Let $\Omega$ be a given nonempty set. The following are true.
\begin{itemize}
\item[(1)] $d$ is a $g$-metric with order 1 on $\Omega$ if and only if $d$ is a metric on $\Omega$.
\item[(2)] $d$ is a (resp. multiplicity-independent) $g$-metric with order 2 on $\Omega$ if and only if $d$ is a (resp. symmetric) $G$-metric on $\Omega$.
\end{itemize}
\end{theorem}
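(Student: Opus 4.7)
The strategy is to match the $g$-metric axioms $(g1)$--$(g4)$ term-by-term with the classical metric axioms (for part (1)) or the $G$-metric axioms $(G1)$--$(G5)$ (for part (2)), exploiting that for small $n$ the combinatorial conditions collapse to a handful of cases.

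For part $(1)$, set $n=1$. Then $(g1)$ merges the identity and non-negativity conditions of a metric; $(g2)$ is symmetry; $(g3)$ is subsumed by $(g1)$, since the only proper inclusion of sets is $\{x\}\subsetneq\{x,y\}$, forcing $0 = d(x,x) \le d(x,y)$; and in $(g4)$ the constraint $s+t+1=1$ leaves only $s=t=0$, giving the classical triangle inequality (after a use of $(g2)$). Each direction is a direct rewrite.

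For part $(2)$ set $n=2$. The easy matches are $(g2)\Leftrightarrow(G4)$ by inspection and $(g4)\Leftrightarrow(G5)$, after using $(g2)$/$(G4)$ to collapse the two sub-cases $s+t=1$ into the single inequality $g(x,y,z)\le g(x,w,w)+g(w,y,z)$. The implication $(g1)\Rightarrow(G1)+(G2)$ is immediate; for the converse I verify $g(x,y,z)>0$ whenever $\{x,y,z\}$ has more than one element, splitting into the two-distinct-values case (handled by $(G2)$ via $(G4)$) and the three-distinct-values case (handled by $(G3)$, which gives $g(x,y,z)\ge g(x,x,y)>0$).

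The main obstacle is deriving monotonicity $(g3)$ from the $G$-metric axioms. I would proceed by case analysis on the pair $(|\{x_i\}|,|\{y_i\}|)\in\{(1,2),(1,3),(2,3)\}$: in the first two cases the left-hand side vanishes by $(g1)$; the case $(2,3)$ splits further into the patterns $g(x,x,y)\le g(x,y,z)$ (direct from $(G3)$, since $y\ne z$) and $g(x,y,y)\le g(x,y,z)$, for which one rewrites $g(x,y,y)=g(y,y,x)$ via $(G4)$, applies $(G3)$ to obtain $g(y,y,x)\le g(y,x,z)$, and then symmetrizes the right-hand side back. The converse direction, that $(g3)$ implies $(G3)$, needs only to handle the corner cases $x=y$ and $x=z$ separately (where the inequality is either $0\le\cdots$ or an equality by $(G4)$), the generic three-distinct case being a direct instance of $(g3)$. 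For the multiplicity-independent refinement, when $n=2$ the only triples with equal underlying sets but distinct multiplicity profiles are $(x,x,y)$ and $(x,y,y)$ up to $(g2)$, so multiplicity-independence is, modulo $(g2)$, precisely $(G6)$.
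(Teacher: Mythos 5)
Your proposal is correct and follows essentially the same route as the paper: a direct term-by-term translation of $(g1)$--$(g4)$ into the metric axioms for $n=1$ and the $G$-metric axioms for $n=2$. The only difference is that you spell out the case analysis showing $(g3)\Leftrightarrow(G3)$ (in particular deriving $g(x,y,y)\le g(x,y,z)$ from $(G3)$ and $(G4)$), a step the paper asserts without detail; your verification of it is right.
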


\begin{proof}
\begin{itemize}
\item[(1)] By the definition of $g$-metric, $d$ is a $g$-metric with order 1 if and only if it satisfies the following conditions.
\begin{itemize}
    \item[$(g1)$] $d(x,y)=0$ if and only if $x=y$,
    \item[$(g2)$] $d(x,y) = d(y,x)$,
    \item[$(g3)$] $0 = d(x,x) \leq d(x,y)$ if $x \neq y$,
    \item[$(g4)$] $d(x,z) \leq d(x,y) + d(y,z)$ for all $x,y,z \in \Omega$.
\end{itemize}
Clearly, these are equivalent to the axioms for a distance function.

\item[(2)] By the definition of $g$-metric, $d$ is a $g$-metric with order 2 if and only if it satisfies the following conditions.
\begin{itemize}
    \item[$(g1)$] $d(x,y,z)=0$ if and only if $x=y=z$,
    \item[$(g2)$] $d(x,y,z) = d(x,z,y) = \cdots $ (symmetry in all three variables),
    \item[$(g3)$] $d(x,x,y) \leq d(x,y,z)$ for all $x,y,z \in \Omega $ with $y \neq z$,
    \item[$(g4)$] $d(x,y,z) \leq d(x,w,w) + d(y,z,w) $ for all $x,y,z,w \in \Omega$.
 \end{itemize}
These are exactly same as the conditions of $G$-metric.
Thus, the $g$-metric with order 2 and the $G$-metric are identical. Moreover, one can see easily the equivalence between the multiplicity-independence of a $g$-metric and the symmetry of a $G$-metric.
\end{itemize}
\end{proof}

Remark that since a $g$-metric with order 2 on a nonempty set $\Omega$ is a $G$-metric, any $g$-metrics with order 2 satisfy all properties of the $G$-metric as shown in \cite{MS06}.
Moreover, if $d$ is a $g$-metric with order 2 on $\Omega$, then $d$ is also a $D$-metric.

Next, we show that an explicit form of conditions for a $g$-metric with order 3.
\begin{proposition}
$d$ is a $g$-metric with order 3 on $\Omega$ if and only if the following conditions hold:
\begin{itemize}
    \item[(1)] $g(x,y,p,q) = 0$ if and only if $x=y=p=q$.
    \item[(2)] $g(x,y,p,q) = g(y,x,p,q) = g(p,y,x,q) =g(q,y,p,x) = g(x,p,y,q) = g(x,y,q,p) $.
    \item[(3)]
$g(x,y,y,y) \leq g(x,x,y,p)$,\\
$g(x,y,y,y) \leq g(x,y,y,p)$,\\
$g(x,y,y,y) \leq g(x,y,p,p)$,\\
$g(x,x,y,y) \leq g(x,x,y,p)$,\\
$g(x,x,y,y) \leq g(x,y,y,p)$,\\
$g(x,x,y,y) \leq g(x,y,p,p)$,\\
$g(x,y,p,p) \leq g(x,y,p,q)$\\
    for all distinct $x,y,p,q\in \Omega$.
    \item[(4)] $g(x,y,p,q) \leq g(x,w,w,w) + g(y,p,q,w)$,\\
    $g(x,y,p,q) \leq g(x,y,w,w) + g(p,q,w,w)$
    for all $x,y,p,q,w\in \Omega$.
\end{itemize}
\end{proposition}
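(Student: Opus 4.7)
The strategy is to specialize Definition \ref{D:g-metric space} to $n=3$ and verify that each of the four abstract axioms (g1)--(g4) is equivalent to the corresponding explicit condition (1)--(4) in the statement, in both directions.

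Condition (g1) is literally (1). For (g2) $\Leftrightarrow$ (2), I would observe that the five equalities listed in (2) are precisely the transpositions $(0\,1)$, $(0\,2)$, $(0\,3)$, $(1\,2)$, $(2\,3)$ of $S_{4}$; since the three adjacent transpositions $(0\,1), (1\,2), (2\,3)$ alone generate $S_{4}$, invariance under the five listed permutations is equivalent to invariance under the whole symmetric group. For (g4) $\Leftrightarrow$ (4), with $n=3$ the triangle inequality is indexed by $(s,t)$ with $s+t+1=3$, so $(s,t)\in\{(0,2),(1,1),(2,0)\}$; the cases $(0,2)$ and $(2,0)$ coincide after applying (g2) and give the first inequality of (4), while $(1,1)$ gives the second.

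The substantive step is (g3) $\Leftrightarrow$ (3). I would stratify the pairs $(\{x_i\},\{y_i\})$ with $\{x_i\}\subsetneq\{y_i\}$ by the cardinalities of the two value sets. If $|\{x_i\}|=1$, the left-hand side is $g(x,x,x,x)=0$ by (g1), so monotonicity is automatic. The case $(|\{x_i\}|,|\{y_i\}|)=(2,3)$ yields the six inequalities of (3) once the two left-hand multiset types $\{x,y,y,y\}$ and $\{x,x,y,y\}$ (the type $\{x,x,x,y\}$ matches the first after the $x\leftrightarrow y$ relabeling permitted by quantifying over distinct parameters) are compared against the three right-hand multiset types $\{x,x,y,p\}$, $\{x,y,y,p\}$, $\{x,y,p,p\}$. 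The case $(3,4)$ reduces to the last inequality $g(x,y,p,p)\le g(x,y,p,q)$; permuting the roles of the three distinct left-side labels also gives $g(x,x,y,p)\le g(x,y,p,q)$ and $g(x,y,y,p)\le g(x,y,p,q)$. The remaining case $(2,4)$ is not written explicitly in (3) but follows by transitivity, for instance $g(\{a,a,a,b\})\le g(\{a,a,b,c\})\le g(\{a,b,c,d\})$ by chaining an inequality from the first six with the seventh.

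The step I expect to be the main obstacle is precisely this case analysis for (g3): the bookkeeping between multisets of size four is not entirely automatic, because in the reverse direction one must verify that the universal quantification in (3) together with permutation invariance (2) and simple transitivity recovers every instance of monotonicity among proper value-set containments, whereas the forward direction simply reads off each of the seven inequalities as a particular instance of (g3).
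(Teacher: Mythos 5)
Your proposal is correct and follows essentially the same route as the paper's proof: identify (g1) with (1), reduce (g4) to the two inequalities via the $(s,t)\in\{(0,2),(1,1),(2,0)\}$ split, and handle (g3) by stratifying the value-set cardinalities, discarding the singleton cases via (g1), reading the $(2,3)$ and $(3,4)$ cases off as the seven listed inequalities, and recovering the $(2,4)$ case by transitivity. Your added remark that the listed transpositions generate $S_{4}$ merely fills in a detail the paper leaves as ``easy to show.''
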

\begin{proof}
It is easy to show that the condition (1) (resp. (2)) is equivalent to $(g1)$ (resp. $(g2)$).
For $(g4)$ since $s+t=2$, there are three possibilities: (i) $s=0$ and $t=2$, (ii) $s=1$ and $t=1$, and (iii) $s=2$ and $t=0$. Since $x,y,p,q,w$ are arbitrary, (i) and (iii) are equivalent. Thus we have two different inequalities given in (4).
For $(g3)$ let $X=\{ x_0, x_1,x_2,x_3 \}$ and $Y=\{y_0, y_1,y_2,y_3  \}$ with $X \subsetneq Y$.
There are six possibilities: (i) $n(X)=1$ and  $n(Y)=2$; (ii) $n(X)=1$ and  $n(Y)=3$; (iii) $n(X)=1$ and  $n(Y)=4$; (iv) $n(X)=2$ and  $n(Y)=3$; (v) $n(X)=2$ and  $n(Y)=4$; (vi) $n(X)=3$ and  $n(Y)=4$.
Note that $(1)$ implies the condition $(g3)$ for the cases (i), (ii), and (iii). Also the condition $(g3)$ for (iv) and (vi) implies the condition $(g3)$ for (v).
Thus
the conditions $(g1),(g2),(g3)$, and $(g4)$ are equivalent the conditions  $(1),(2),(4)$, and (iv) and (vi).
For (iv), let $X=\{x,y\}$ and $Y=\{x,y,p\}$. Then, we have for all distinct $x,y,p\in \Omega$,
\begin{align*}
g(x,y,y,y) \leq g(x,x,y,p),\quad & g(x,y,y,y) \leq g(x,y,y,p),\\
g(x,y,y,y) \leq g(x,y,p,p),\quad & g(x,x,y,y) \leq g(x,x,y,p),\\
g(x,x,y,y) \leq g(x,y,y,p),\quad & g(x,x,y,y) \leq g(x,y,p,p).
\end{align*}

For (vi), let $X=\{x,y,p\}$ and $Y=\{x,y,p,q\}$. Then, we have
$g(x,y,p,p) \leq g(x,y,p,q)$ for all distinct $x,y,p,q\in \Omega$.
\end{proof}

A new $g$-metric can be constructed from given $g$-metrics.
\begin{lemma}\label{lemma:new_g-metric}
Let $(\Omega, g)$ and $(\Omega, \tilde{g})$ be $g$-metric spaces. Then the following functions, denoted by $d$, are $g$-metrics on $\Omega$.
\begin{itemize}
\item[(1)] $d(x_0, x_1, \ldots, x_{n}) = g(x_0, x_1, \ldots, x_{n}) + \tilde{g}(x_0, x_1, \ldots, x_{n})$.
\item[(2)] $d(x_0, x_1, \ldots, x_{n}) = \psi(g(x_0, x_1, \ldots, x_{n}))$ where $\psi $ is a function on $[0,\infty)$ satisfies
\begin{itemize}
\item[(i)] $\psi $ is increasing on $[0,\infty)$;
\item[(ii)] $\psi(0)=0 $;
\item[(iii)] $\psi (x+y)  \leq \psi (x) +  \psi (y)$ for all $x,y\in [0,\infty)$.
\end{itemize}
\end{itemize}
\end{lemma}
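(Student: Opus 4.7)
The plan is to verify each axiom $(g1)$--$(g4)$ of Definition \ref{D:g-metric space} separately for each of the two constructions; both are routine, but the composition in (2) requires chaining monotonicity with subadditivity.

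For part (1), set $d = g + \tilde g$. Axiom $(g1)$ follows because $d$ is a sum of two non-negative quantities, so $d(x_0,\ldots,x_n)=0$ forces both $g(x_0,\ldots,x_n)=0$ and $\tilde g(x_0,\ldots,x_n)=0$, which by $(g1)$ for $g$ forces $x_0=\cdots=x_n$ (the converse is immediate). Axioms $(g2)$ and $(g3)$ are inherited termwise, because the sum of two permutation-invariant (respectively, monotone) functions is itself permutation invariant (respectively, monotone). For $(g4)$, I would write down the triangle inequalities for $g$ and for $\tilde g$ at the same splitting $(x_0,\ldots,x_s \mid y_0,\ldots,y_t)$ with the same auxiliary point $w$, then add them and regroup into $d(x_0,\ldots,x_s,w,\ldots,w)+d(y_0,\ldots,y_t,w,\ldots,w)$.

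For part (2), set $d(x_0,\ldots,x_n) = \psi(g(x_0,\ldots,x_n))$. Axiom $(g1)$ uses that $\psi$ is increasing with $\psi(0)=0$, so $\psi(t)=0$ precisely when $t=0$; thus $d=0$ iff $g=0$ iff all entries coincide. Axiom $(g2)$ is automatic since $d$ depends on the tuple only through $g$, which is already permutation invariant. For $(g3)$, apply $\psi$ to both sides of $g(x_0,\ldots,x_n)\leq g(y_0,\ldots,y_n)$ and use monotonicity of $\psi$ to preserve the inequality. The central step is $(g4)$: starting from the triangle inequality for $g$,
\[
g(x_0,\ldots,x_s,y_0,\ldots,y_t)\leq g(x_0,\ldots,x_s,w,\ldots,w)+g(y_0,\ldots,y_t,w,\ldots,w),
\]
I would apply $\psi$ and use (i) to push the inequality through, and then invoke subadditivity (iii) to split the right-hand side, obtaining
\[
d(x_0,\ldots,x_s,y_0,\ldots,y_t)\leq d(x_0,\ldots,x_s,w,\ldots,w)+d(y_0,\ldots,y_t,w,\ldots,w).
\]

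The only subtle point is in the ``only if'' direction of $(g1)$ for part (2): the deduction $\psi(t)=0\Rightarrow t=0$ requires $\psi$ to be strictly increasing (or, equivalently under (ii), $\psi(t)>0$ for $t>0$). I read hypothesis (i) in that strict sense, as is standard in this context; otherwise one would need to add this as an extra assumption. Beyond this, no step presents a genuine obstacle, since the whole lemma is a direct pushforward of the axioms through a sum and through a monotone subadditive scalar function.
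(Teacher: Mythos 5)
Your proposal is correct and follows essentially the same route as the paper: each axiom $(g1)$--$(g4)$ is verified termwise for the sum in (1), and pushed through $\psi$ via monotonicity and subadditivity for (2). Your remark that $(g1)$ in part (2) needs $\psi(t)>0$ for $t>0$ (i.e., strict increase, given $\psi(0)=0$) is well taken --- the paper silently assumes exactly this when it asserts ``$\psi(x)=0$ if and only if $x=0$.''
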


\begin{proof}
\begin{itemize}
\item[(1)] It is easy to check that
\begin{align*}
d(x_0, x_1, \ldots, x_{n}) = 0
&\Longleftrightarrow g(x_0, x_1, \ldots, x_{n})=0 \text{ and } \tilde{g}(x_0, x_1, \ldots, x_{n})=0\\
&\Longleftrightarrow x_0 = x_1= \cdots = x_{n}.
\end{align*}
So, $d$ holds the condition $(g1)$. It is clear that $d$ holds the condition $(g2)$.
Let $\{x_i: i=0,\ldots, n\} \subsetneq \{y_i: i=0,\ldots, n\}$. Since $g$ and $\tilde{g}$ are $g$-metrics, it follows that
\begin{align*}
d(x_0, x_1, \ldots, x_{n})
&= g(x_0, x_1, \ldots, x_{n}) + \tilde{g}(x_0, x_1, \ldots, x_{n})\\
&\leq g(y_0, y_1, \ldots, y_{n}) + \tilde{g}(y_0, y_1, \ldots, y_{n}) \\
&=d(y_0, y_1, \ldots, y_{n}).
\end{align*}
Thus $d$ satisfies the condition $(g3)$.
Let $x_0, \ldots, x_s, y_0, \ldots, y_t, w \in \Omega$ with $s + t + 1 = n$.
Then it follows that
\begin{align*}
d(x_0, \ldots, x_s, y_0, \ldots, y_t)
&= g(x_0, \ldots, x_s, y_0, \ldots, y_t) + \tilde{g}(x_0, \ldots, x_s, y_0, \ldots, y_t)\\
&\leq g(x_0, \ldots, x_s, w, \ldots, w) + g(y_0, \ldots, y_t, w, \ldots, w) \\
&\quad+ \tilde{g}(x_0, \ldots, x_s, w, \ldots, w) + \tilde{g}(y_0, \ldots, y_t, w, \ldots, w)\\
& \leq d(x_0, \ldots, x_s, w, \ldots, w) + d(y_0, \ldots, y_t, w, \ldots, w).
\end{align*}
Thus it satisfies the condition $(g4)$.
Therefore, $d$ is a $g$-metric.

\item[(2)]
Since $\psi(x)=0$ if and only if $x=0$, it holds the condition $(g1)$.
It is clear that $d$ holds the condition $(g2)$.
Let $\{x_i: i=0,\ldots, n\} \subsetneq \{y_i: i=0,\ldots, n\}$. Since $g$ is a  $g$-metric, it follows that $g(x_0, x_1, \ldots, x_{n}) \leq g(y_0, y_1, \ldots, y_{n})$. Since $\psi$ is increasing on $[0,\infty)$,
\begin{align*}
d(x_0, x_1, \ldots, x_{n})
&= \psi(g(x_0, x_1, \ldots, x_{n}))\\
&\leq  \psi(g(y_0, y_1, \ldots, y_{n})) \\
&=d(y_0, y_1, \ldots, y_{n}).
\end{align*}
Thus such $d$ satisfies the condition $(g3)$.
Let $x_0, \ldots, x_s, y_0, \ldots, y_t, w \in \Omega$ with $s + t + 1 = n$.
Then it follows that
\begin{align*}
d(x_0, \ldots, x_s, y_0, \ldots, y_t)
&= \psi(g(x_0, \ldots, x_s, y_0, \ldots, y_t)) \\
& \leq \psi(g(x_0, \ldots, x_s, w, \ldots, w) + g(y_0, \ldots, y_t, w, \ldots, w)) \\
& \leq \psi(g(x_0, \ldots, x_s, w, \ldots, w)) + \psi(g(y_0, \ldots, y_t, w, \ldots, w))\\
& =d(x_0, \ldots, x_s, w, \ldots, w) +  d(y_0, \ldots, y_t, w, \ldots, w).
\end{align*}
Thus it satisfies the condition $(g4)$.
Therefore, $d$ is a $g$-metric.
\end{itemize}
\end{proof}

\begin{example}\label{ex:newold:g-metric}
The following functions, denoted by $\psi$,  satisfy the conditions in Lemma \ref{lemma:new_g-metric} (2).
Thus, each $\psi \circ g$ is a $g$-metrics for any $g$-metric $g$.
\begin{itemize}
\item[(1)]  $(\psi \circ g)(x_0, \ldots, x_n) =kg(x_0, \ldots, x_n)$ where $\psi(x)=kx$ with a fixed $k>0$.

\item[(2)]  $(\psi \circ g)(x_0, \ldots, x_n) = \dfrac{g(x_0, \ldots, x_n)}{1+g(x_0, \ldots, x_n)}$ where $\psi(x)=\dfrac{x}{1+x}$.

\item[(3)]  $(\psi \circ g)(x_0, \ldots, x_n) = \sqrt{g(x_0, \ldots, x_n)}$ where $\psi(x)=\sqrt{x}$. Furthermore,  it is true for $\psi(x)=x^{1/p}$ with a fixed $p \geq1$.

\item[(4)] $(\psi \circ g)(x_0, \ldots, x_n) =\log{(g(x_0, \ldots, x_n)+1)}$ where $\psi(x)=\log{(x+1)}$.

\item[(5)] $(\psi \circ g)(x_0, \ldots, x_n) =\min\{ k, g(x_0, \ldots, x_n) \}$ where $\psi(x)=\min\{ k, x \}$ with a fixed $k>0$.
\end{itemize}
\end{example}

We give several interesting examples of $g$-metric on a variety of settings in the following.

\begin{example}
(Discrete $g$-metric) For a nonempty set $\Omega$, define $d: \Omega^{n+1} \rightarrow \R_+$ by
\begin{equation*}
d(x_0, \ldots, x_n)=
\begin{cases}
0 \quad \text{if } x_{0} = \cdots = x_{n}, \\
1 \quad \text{otherwise }
\end{cases}
\end{equation*}
for all $x_0, \ldots, x_n \in \Omega$.
Then $d$ is a $g$-metric on $\Omega$.
\end{example}
\begin{proof}
It is trivial that $d$ satisfies the conditions $(g1)$ and $(g2)$.
\begin{itemize}
\item[$(g3)$] Let $x_0, \ldots,~ x_n,~y_0, \ldots,~ y_n \in \Omega$ such that
$\{x_0, \ldots, x_n \} \subsetneq \{y_0, \ldots, y_n \}$.
If $n(\{ x_0, \ldots, x_n \}) = 1$, then $x_{0} = \cdots = x_{n}$, and so\\ $g(x_0, \ldots, x_n) = 0 < 1 = g(y_0, \ldots, y_n)$. If $n(\{ x_0, \ldots, x_n \}) > 1$, then $g(x_0, \ldots, x_n) = 1 = g(y_0, \ldots, y_n)$.
\item[$(g4)$] Let $x_0, \ldots,~ x_s,~y_0, \ldots,~y_t,~w \in \Omega$ with $s + t + 1 = n$.
If $d(x_0, \ldots, x_s, w, \ldots, w) = 1$ or $d(y_0, \ldots, y_t, w, \ldots, w) = 1$, then
$$ g(x_0, \ldots, x_s, y_0, \ldots, y_t) \leq 1 \leq g(x_0, \ldots, x_s, w, \ldots, w) + g(y_0, \ldots, y_t, w, \ldots, w). $$
If $d(x_0, \ldots, x_s, w, \ldots, w) = 0$ and $d(y_0, \ldots, y_t, w, \ldots, w) = 0$, then\\ $x_i = y_j = w$ for all $i = 0, 1, \dots, s$ and $j = 0, 1, \dots, t$.\\
So $g(x_0, \ldots, x_s, y_0, \ldots, y_t) = 0$. Thus,
$$ g(x_0, \ldots, x_s, y_0, \ldots, y_t) = 0 = g(x_0, \ldots, x_s, w, \ldots, w) + g(y_0, \ldots, y_t, w, \ldots, w). $$
\end{itemize}
\end{proof}

\begin{example} (Diameter $g$-metric)
Define $d: \R_+^{n+1} \longrightarrow \R_+$ by
\begin{equation*}
d(x_0, \ldots, x_n) = \max_{0 \leq i \leq n}{x_{i}} - \min_{0 \leq j \leq n}{x_{j}}
\end{equation*}
for all $x_0, \ldots, x_n \in \R_+$.
Then $d$ is a $g$-metric on $\R_+$.
\end{example}
\begin{proof}
It is easy to check that $d$ holds the conditions $(g1)$, $(g2)$, and $(g3)$. Let us show that such $d$ holds the condition $(g4)$.
Let $x_0, \ldots, x_s, y_0, \ldots, y_t, w \in \R_+$ with $s+t+1=n$.
Set $M_x = \max\{ x_0, \ldots, x_s \}$, $m_x = \min\{ x_0, \ldots, x_s \}$,
$M_y = \max\{ y_0, \ldots, y_t \}$, and $m_y = \min\{ y_0, \ldots, y_t \}$.
Without loss of generality, we assume that $M_x \leq M_y$.
Then there are  three different cases:
(i) $m_x \leq M_x \leq m_y \leq M_y$; (ii) $m_x \leq m_y \leq M_x \leq M_y$; (iii) $m_y \leq m_x \leq M_x \leq M_y$.
For notational simplicity we denote $A= d(x_0, \ldots, x_s,y_0,\ldots,y_t)$,
$B=d(x_0, \ldots, x_s,w,\ldots,w)$, $C=d(y_0, \ldots, y_t,w\ldots,w)$.
For the case (i), clearly $A=M_y - m_x$, and there are five different possibilities with respect to $w$ as follows:
\begin{align*}
m_x \leq M_x \leq m_y \leq M_y \leq w &  \Longrightarrow  A  \leq  w- m_x = B \leq B+C; \\
m_x \leq M_x \leq m_y \leq w \leq M_y &  \Longrightarrow  A =  M_y - m_y + m_y - m_x \leq B+C; \\
m_x \leq M_x \leq w \leq m_y \leq M_y &  \Longrightarrow  A=  M_y - w + w - m_x = B +C; \\
m_x \leq w \leq M_x \leq m_y \leq M_y &  \Longrightarrow  A =  M_y - w + w - m_x \leq  B+C;\\
w \leq m_x \leq M_x \leq m_y \leq M_y  &  \Longrightarrow A  \leq  M_y- w = C \leq B+C.
\end{align*}
For the case (ii), $A  = M_y - m_x $. There are five different possibilities with respect to the value of $w$ as follows:
\begin{align*}
m_x \leq m_y \leq M_x \leq M_y \leq w &  \Longrightarrow A   \leq  w- m_x = B \leq B+C; \\
m_x \leq m_y \leq M_x \leq w \leq M_y &  \Longrightarrow A =   M_y - m_y + m_y - m_x \leq  B +C; \\
m_x \leq m_y \leq w \leq M_x \leq M_y &  \Longrightarrow A =   M_y - w + w - m_x \leq  B+C;\\
m_x \leq w \leq m_y \leq M_x \leq M_y &  \Longrightarrow A = M_y - w + w - m_x \leq B+C;\\
w \leq m_x \leq m_y \leq M_x \leq M_y &  \Longrightarrow A \leq  M_y- w  = C \leq B+C .
\end{align*}
For the case (iii),
when $M_y \leq w$ or $w \leq m_y$, it holds that $d(x_0, \ldots, x_s,y_0,\ldots,y_t)  \leq d(y_0, \ldots, y_t,w\ldots,w)$.
Otherwise, $d(x_0, \ldots, x_s,y_0,\ldots,y_t)  = d(y_0, \ldots, y_t,w\ldots,w)$.
Therefore, $d$ satisfies the condition $(g4)$.
\end{proof}

\begin{remark}
For a nonempty normed space $(\Omega, \| \cdot \|)$ we define $d: \Omega^{n+1} \longrightarrow \R_+$ by
\begin{equation*}
d(x_0, \ldots, x_n) = \max_{0 \leq i \leq n}{\| x_{i} \|} - \min_{0 \leq j \leq n}{\| x_{j} \|}
\end{equation*}
for all $x_0, \ldots, x_n \in \Omega$. Then it is not a $g$-metric on $\Omega$. In fact, it holds $(g2)$, $(g3)$, and $(g4)$, but does not hold $(g1)$ in general. Indeed, there possibly exist $x_0, x_1,\ldots, x_n\in \Omega$ such that $\| x_0\| = \| x_1\| = \cdots = \| x_n\|$ although $x_i \neq x_j$ for some $i\neq j$.
\end{remark}

\begin{example} \label{Ex:average g-metric}
(Average $g$-metric) For a given metric space $(\Omega, \delta)$, define $d: \Omega^{n+1} \longrightarrow \R_+$ by
\begin{equation*}
d(x_0, \ldots, x_n) = \frac{1}{(n+1)^2} \sum_{i,j=0}^{n} \delta (x_{i}, x_{j})
\end{equation*}
for all $x_0, \ldots, x_n \in \Omega$.
Then $d$ is a $g$-metric on $\Omega$.
\end{example}
\begin{proof}
By Example \ref{ex:newold:g-metric} (1), it is enough to show that
$\displaystyle d(x_0, \ldots, x_n) = \sum_{i<j} \delta (x_{i}, x_{j})$ is a $g$-metric on $\Omega$. Clearly, $d$ holds the conditions $(g2)$ and $(g3)$.
\begin{itemize}
    \item[$(g1)$] Since $\delta$ is a metric on $\Omega$, it follows that
$d(x_0, \ldots, x_n) = 0$ if $x_0 = \cdots = x_n$.
Conversely, if $d(x_0, \ldots, x_n) = 0$, then $\delta(x_{i}, x_{j}) = 0$ for all $i,j = 0, \dots, n$. So $x_0 = \cdots = x_n$.
    \item[$(g4)$] Let $x_0, \ldots, x_s, y_0, \ldots, y_t, w \in \Omega$ with $s + t + 1 = n$.
Since $\delta$ is a metric on $\Omega$, it holds from the triangle inequality that
$\delta(x_{i}, y_{j}) \leq \delta(x_{i}, w) + \delta(y_{j}, w)$ for all $i = 0, \dots, s$ and $j = 0, \dots, t$.
Then it follows that
\begin{align*}
    \sum_{i,j} \delta(x_{i}, y_{j}) \leq \sum_{i} \delta(x_{i}, w) + \sum_{j} \delta(y_{j}, w).
\end{align*}
Adding $\displaystyle \sum_{i<j} \delta(x_{i}, x_{j})$ and $\displaystyle \sum_{i<j} \delta(y_{i}, y_{j})$ on both sides, we have
\begin{align*}
d(x_0, \ldots, x_s, y_0, \ldots, y_t) \leq d(x_0, \ldots, x_s, w, \ldots, w) + d(y_0, \ldots, y_t, w, \ldots, w).
\end{align*}
\end{itemize}
\end{proof}

\begin{example} \label{Ex:max g-metric}
(Max $g$-metric) For a given metric space $(\Omega, \delta)$, define $d: \Omega^{n+1} \longrightarrow \R_+$ by
\begin{equation*}
d(x_0, \ldots, x_n) = \max_{0 \leq i,j \leq n} \delta(x_{i}, x_{j})
\end{equation*}
for all $x_0, \ldots, x_n \in \Omega$.
Then $d$ is a $g$-metric on $\Omega$.
\end{example}
\begin{proof}
Obviously, $d$ satisfies $(g1)$, $(g2)$, and $(g3)$.
Let $x_0, \ldots,~ x_s,~y_0, \ldots,~y_t,~w \in \Omega$ with $s + t + 1 = n$.
Let $a$ and $b$ be distinct elements in $\Omega$ such that
$$ d(x_{0}, \dots, x_{s}, y_{0}, \dots, y_{t}) = \delta(a,b).$$
Then there are three different possibilities:
(i) $a, b \in \{ x_0, \ldots, x_s \}$; (ii) $a, b \in \{ y_0, \ldots, y_t \}$; (iii) $a \in \{ x_0, \ldots, x_s \}$, $b \in \{ y_0, \ldots, y_t \}$.
For (i) and (ii), it is clear that $d$ holds $(g4)$.
For (iii), since
$\delta(x_{i}, y_{j}) \leq \delta(x_{i}, w) + \delta(x_{j}, w)$ for all $i = 0, 1, \dots, s$ and $j = 0, 1, \dots, t$, it follows that $\displaystyle \max_{i,j} \delta(x_{i}, y_{j}) \leq \max_{i} \delta(x_{i}, w) + \max_{j} \delta(y_{j}, w)$.
Thus,
\begin{displaymath}
\begin{split}
d(x_0, \ldots, x_s, y_0, \ldots, y_t) & = \delta(a,b) = \max_{i,j} \delta(x_{i}, y_{j}) \\
& \leq \max_{i} \delta(x_{i}, w) + \max_{j} \delta(y_{j}, w) \\
& \leq d(x_0, \ldots, x_s, w, \ldots, w) + d(y_0, \ldots, y_t, w, \ldots, w).
\end{split}
\end{displaymath}
\end{proof}

\begin{remark}
In Example \ref{Ex:average g-metric}, on a given metric space $(\Omega, \delta)$
\begin{equation*}
d(x_0, \ldots, x_n) = \sum_{i,j=0}^{n} \delta (x_{i}, x_{j})
\end{equation*}
is a $g$-metric by Example \ref{ex:newold:g-metric} (1).
Then this $g$-metric and the max $g$-metric in Example \ref{Ex:max g-metric} can be considered as
\begin{displaymath}
\begin{split}
d(x_0, \ldots, x_n) & = \sum_{i,j=0}^{n} \delta (x_{i}, x_{j}) = || M ||_{1}, \\
d(x_0, \ldots, x_n) & = \max_{0 \leq i,j \leq n} \delta(x_{i}, x_{j}) = || M ||_{\infty},
\end{split}
\end{displaymath}
where $M = [m_{ij}]_{0\leq i,j \leq n}$ is the $(n+1) \times (n+1)$ matrix whose entries are $m_{ij} = \delta(x_{i}, x_{j})$. Here, $|| \cdot ||_{1}$ and $|| \cdot ||_{\infty}$ are $\ell_{1}$ and $\ell_{\infty}$ matrix norms, respectively.
So it is a natural question whether or not $|| M ||_{p}$ for $1 < p < \infty$ is a $g$-metric on the metric space $(\Omega, \delta)$.
\end{remark}

\begin{example}\label{Ex:shortest_path}
(Shortest path $g$-metric)
Let $(\Omega, \delta)$ be a nonempty metric space and let $d: \Omega^{3} \to \mathbb{R}_{+}$ be a map defined by
$$ d(x,y,z) = \min\{ \delta(x,y) + \delta(y,z), \delta(x,z) + \delta(z,y), \delta(y,x) + \delta(x,z) \}. $$
Then $d$ is a $g$-metric with order $2$.
\end{example}
\begin{proof}
By Theorem \ref{thm:relation} (2), it is enough to show that $d$ is a $G$-metric.
\begin{itemize}
\item[(G1)] Since $\delta$ is a metric on $\Omega$, $\delta(x,x) = 0$ for all $x \in \Omega$. So $d(x,x,x) = 0$.
\item[(G2)] $d(x,x,y) = \min\{ \delta(x,y), 2\delta(x,y), \delta(x,y) \} = \delta(x,y) > 0$ for all $x \neq y$.
\item[(G3)] $d(x,x,y) = \delta(x,y) \leq d(x,y,z)$ for all $x,y,z \in \Omega$.
\item[(G4)] $d(x,y,z) = d(y,z,x) = d(z,x,y) = d(x,z,y) = d(z, y, x) = d(y,x,z)$.
\item[(G5)] Without loss of generality, assume that $d(w,y,z) = \delta(w,y) + \delta(y,z)$. Then it holds that
\begin{equation*}
d(x,w,w) + d(w,y,z)
= \delta(x,w) + \delta(w,y) + \delta(y,z)
\geq \delta(x,y) + \delta(y,z)
\geq d(x,y,z).
\end{equation*}
The first inequality follows from the triangle inequality for the metric $\delta$, and the second inequality follows from the definition of $d(x,y,z)$.
\end{itemize}
Thus, $d$ is a $G$-metic. Therefore, $d$ is a $g$-metric with order 2.
\end{proof}

\begin{remark}
\begin{itemize}
\item[(1)] All $g$-metrics listed above are multiplicity-independent.

\item[(2)] For a given metric space $(\Omega, \delta)$ we can generalize the $g$-metric with order $2$ in Example \ref{Ex:shortest_path} as a map $d: \Omega^{n+1} \to \mathbb{R}_{+}$ defined by
\begin{equation*}
d(x_0, \ldots, x_n) = \min_{ \pi \in \mathcal{S}} \sum_{i=0}^{n-1} \delta( x_{\pi(i)}, x_{\pi(i+1)} )
\end{equation*}
for all $x_0, \ldots, x_n \in \Omega$.
Here, $\mathcal{S}$ denotes the set of all permutations on $\{ 0, 1, \ldots, n \}$.
That is, $d(x_0, \ldots, x_n)$ is the length of the shortest path connecting $x_0, \ldots, x_n$.
Finding the shortest path is very important problem in operations research and theoretical computer science, which is also known as the traveling salesman problem\cite{PR91,Verblunsky1951}.
In Example \ref{Ex:shortest_path} we showed that the shortest path $g$-metric is a $g$-metric with order $2$, but it is an open problem that $d$ is a $g$-metric for any $n \geq 3$.

\item[(3)]  Let $\Omega$ be a nonempty subset of $\mathbb{R}^{n},$ i.e., $\Omega$ can be considered as an $n$-dimensional data set. Define $d:\Omega^{n+1} \longrightarrow \mathbb{R}_{+}$ by
$d(x_{0},\ldots,x_{n})  $ is the diameter of the smallest closed ball, $B$, such that $\{x_{0},\ldots,x_{k}\} \subseteq B$.
This is called the smallest enclosing circle problem, which was introduced by Sylvester\cite{sylvester1857}.
For more information, see \cite{DRAGER2007929,Mordukhovich2013,Welzl91}.
It is an open problem that $d$ is a $g$-metric for any $n \geq 3$.
\end{itemize}
\end{remark}

\begin{theorem}\label{thm:g-metric:basic-properties}
Let $g$ be a $g$-metric with order $n$ on a nonempty set $\Omega$.
The following are true:
\begin{itemize}
    \item[(1)] $g(\underbrace{x, \ldots, x}_\text{$s$ times},y, \ldots, y) \leq g( \underbrace{x, \ldots, x}_\text{$s$ times}, w, \ldots, w) + g(\underbrace{w, \ldots, w}_\text{$s$ times},y, \ldots, y)$,
    \item[(2)] $g(x, y,  \ldots, y) \leq g(x, w,  \ldots, w) + g(w, y,  \ldots, y)$, and \\
$ g(x, y, \ldots, y) \leq g(x, w, \ldots, w) + g(y, w, \ldots, w) $  if $g$ is multiplicity-independent,
    \item[(3)] $g(\underbrace{x,  \ldots, x}_\text{$s$ times},w,\ldots, w) \leq s g(x, w, \ldots, w)$ and \\
    $g(\underbrace{x,  \ldots, x}_\text{$s$ times},w,\ldots, w) \leq (n+1-s) g(w, x, \ldots, x)$,
    \item[(4)] $ \displaystyle{g(x_0, x_1, \ldots, x_n) \leq \sum_{i=0}^{n} g(x_{i}, w, \ldots, w)}$,
    \item[(5)] $\big| g(y, x_1, \ldots, x_n) - g(w, x_1, \ldots, x_n) \big| \leq \max\{ g(y, w,  \ldots, w), g(w, y,  \ldots, y) \}$,
     \item[(6)] $\big| g(\underbrace{x, \ldots, x}_\text{$s$ times},w,\ldots,w) - g(\underbrace{x, \ldots, x}_\text{$\tilde{s}$ times},w,\ldots,w) \big| \leq \big| s-\tilde{s} \big| g(x, w,  \ldots, w) $.
     \item[(7)]    $g(x, w, \ldots, w) \leq (1+(s-1)(n+1-s))g(\underbrace{x,  \ldots, x}_\text{$s$ times},w,\ldots, w) $,
  \end{itemize}

\end{theorem}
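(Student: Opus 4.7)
Abbreviate $A_k := g(\underbrace{x,\ldots,x}_{k},\underbrace{w,\ldots,w}_{n+1-k})$, so that the target inequality reads $A_1 \leq \bigl(1+(s-1)(n+1-s)\bigr)\,A_s$. The plan is to apply the triangle inequality $(g4)$ repeatedly, each time with auxiliary point taken to be $x$ itself, and to use permutation invariance $(g2)$ to collapse the two terms on the right-hand side into values of the form $A_j$.

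First, I would obtain a base bound
\[
A_1 \;\leq\; A_s + A_{n+2-s}
\]
as follows. Using $(g2)$, rewrite $g(x,w,\ldots,w)$ so that the unique $x$ sits inside an $x$-group of length $s$ (padded by $s-1$ copies of $w$), while the remaining $n+1-s$ copies of $w$ form the $y$-group. Invoking $(g4)$ with auxiliary point $x$ turns each of the padding slots on both sides into copies of $x$; after a final permutation the first summand carries $n+2-s$ copies of $x$ and the second carries $s$ copies of $x$, giving the claimed bound.

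Second, I would iterate on the larger term. An analogous splitting applied to $A_{n+2-s}$ produces a bound of the form $A_{n+2-s} \leq A_s + A_{m_1}$ in which the multiplicity profile of $A_{m_1}$ is strictly closer to that of $A_s$. Continuing, after $k$ such steps one obtains $A_1 \leq (k+1)\,A_s + A_{m_k}$; the process terminates once $m_k$ reaches $s$ or enters a range where parts (3) and (6), already established, can absorb the residual $A_{m_k}$ into an integer multiple of $A_s$. Counting the iterations and the terminal residue should yield precisely the factor $1+(s-1)(n+1-s)$.

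The main obstacle will be showing that the iteration makes genuine progress at every step and that the constants accumulate to exactly $1+(s-1)(n+1-s)$ rather than overshoot. The factor has a natural bipartite interpretation, being the product of the $s-1$ ``extra'' $x$-slots in $A_s$ (relative to $A_1$) with the $n+1-s$ $w$-slots in $A_s$, so the telescoping should be recoverable by tracking which index pair $(j,n+2-j)$ each application of $(g4)$ operates on. Making this bookkeeping rigorous -- verifying that no pair is revisited and that the final residue collapses into exactly one additional copy of $A_s$ -- is the delicate combinatorial step where the proof is likely to be most subtle, and where a careful use of (3) and (6) is needed to close the chain.
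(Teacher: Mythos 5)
Your proposal addresses only item (7) (items (1)--(6) are left untouched), and (7) is precisely where the trouble lies. Writing $A_k$ for the value of $g$ on $k$ copies of $x$ and $n+1-k$ copies of $w$, as in your notation, your opening move is correct: splitting $(x,w,\ldots,w)$ into a block of size $s$ containing the single $x$ and a block of the remaining $n+1-s$ copies of $w$, with auxiliary point $x$, does give $A_1\leq A_s+A_{n+2-s}$ by $(g4)$ and $(g2)$. This is close in spirit to the paper's own chain, which raises the $x$-multiplicity one step at a time to get $A_1\leq A_s+(s-1)A_n$. But the part you defer --- ``verifying that the final residue collapses into exactly one additional copy of $A_s$'' --- is not bookkeeping; it is the entire content of the claim, and it cannot be carried out. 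Part (3) gives $A_s\leq(n+1-s)A_n$, never the reverse inequality $A_n\leq(n+1-s)A_s$ that would be needed to absorb the residual term, and every further application of $(g4)$ either reproduces a term with at least as many $x$'s or reintroduces $A_1$ itself. (The paper's own proof of (7) founders at exactly this point: its final displayed inequality invokes (3) in the reversed, invalid direction.)

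In fact statement (7) is false for $n\geq 3$ and $1<s<n$, so no completion of your plan exists. Take $n=3$, $\Omega=\{x,w\}$, and let $g$ of a quadruple depend only on the number $k$ of coordinates equal to $x$, with values $c_0=c_4=0$, $c_1=c_3=100$, $c_2=1$. Conditions $(g1)$--$(g3)$ are immediate, and an exhaustive check of $(g4)$ over all splits and both choices of auxiliary point reduces to the constraints $c_2\leq 2c_1$, $c_2\leq 2c_3$, $c_3\leq c_1+c_2$, and $c_1\leq c_2+c_3$, all of which hold. Yet for $s=2$ we get $g(x,w,w,w)=100$ while $(1+(s-1)(n+1-s))\,g(x,x,w,w)=3$. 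The inequality you are trying to telescope toward simply does not follow from the axioms; it holds only in the boundary cases $s=1$ (trivially) and $s=n$ (where it is an instance of (3) with the roles of $x$ and $w$ exchanged), which is why the $n=2$, i.e.\ $G$-metric, case is unproblematic. Your instinct that the combinatorial step is where the proof is most subtle was right, but the subtlety is that the step is impossible; any repair must either restrict to $s\in\{1,n\}$ or add a hypothesis such as multiplicity-independence.
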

\begin{proof}
(1) and (2) follow from the condition $(g4)$.
Note that for a multiplicity-independent $g$-metric $g$, it is true that $g(y, w,  \ldots, w) = g(w, y,  \ldots, y)$.
\begin{itemize}
\item[(3)] By the condition $(g4)$, it follows that
\begin{align*}
g(\underbrace{x,  \ldots, x}_\text{$s$ times},w,\ldots, w)
&\leq g(\underbrace{x,  \ldots, x}_\text{$s-1$ times}, w, w) + g(x, w, \ldots, w) \\
&\leq g(\underbrace{x,  \ldots, x}_\text{$s-2$ times}, w, w, w) + g(x, w,  \ldots, w) + g(x, w,  \ldots, w) \\
& \quad \vdots \\
&\leq g(x, w,  \ldots, w) + g(x, w,  \ldots, w) + \cdots + g(x, w,  \ldots, w) \\
&\leq s g(x, w, \ldots, w).
\end{align*}

\item[(4)] By the condition $(g2)$ and $(g4)$, it follows that
\begin{align*}
g(x_0, x_1, \ldots, x_n)
&\leq g(x_0, w,  \ldots, w) + g(x_1, x_2, \ldots, x_n, w) \\
&\leq g(x_0, w,  \ldots, w) + g(x_1, w, \ldots, w) + g(x_2, \ldots, x_n, w, w) \\
& \quad \vdots \\
&\leq \sum_{i=0}^{n} g(x_{i}, w, \ldots, w).
\end{align*}
\item[(5)] By the condition $(g4)$, it follows that
$ g(y, x_1, \ldots, x_n) \leq g(w, x_1, \ldots, x_n) + g(y, w, \ldots, w)$.
Then
$$ g(y, x_1, \ldots, x_n) - g(w, x_1, \ldots, x_n) \leq g(y, w, \ldots, w). $$
Similarly, we have
$$ g(w, x_1, \ldots, x_n) - g(y, x_1, \ldots, x_n) \leq g(w, y, \ldots, y). $$

\item[(6)] By (3), it is trivial.

\item[(7)] By Theorem \ref{thm:g-metric:basic-properties} (3), we have
\begin{align*}
g(x, w, \ldots, w)
& \leq g(x,x,w,\ldots,w) + g(w,x,\ldots,x)\\
& \leq g(x,x,x,w,\ldots,w) + g(w,x,\ldots,x)+g(w,x,\ldots,x)\\
& \quad \vdots \\
&\leq g(\underbrace{x,\ldots,x}_\text{$s$ times}, w,  \ldots, w) + (s-1)g(w,x,\ldots,x)  \\
&\leq g(\underbrace{x,\ldots,x}_\text{$s$ times}, w,  \ldots, w) + (s-1)(n+1-s)g(\underbrace{x,\ldots,x}_\text{$s$ times}, w,  \ldots, w)\\
&= (1+(s-1)(n+1-s))g(\underbrace{x,\ldots,x}_\text{$s$ times}, w,  \ldots, w).
\end{align*}

\end{itemize}
\end{proof}

For a given $g$-metric, we can construct a distance function.
\begin{example}
For any $g$-metric space $(\Omega,g)$, the following are distance functions:
\begin{itemize}
\item[(1)] $d(x,y)= g(\underbrace{x, \ldots, x}_\text{$s$ times},y, \ldots, y) +  g(\underbrace{y, \ldots, y}_\text{$s$ times}x, \ldots, x)$,
\item[(2)] $d(x,y) = g(x,y,\ldots, y) + g(x,x,y,\ldots, y) + \cdots + g(x,x,\ldots, x,y)$,
\item[(3)] $d(x,y) = \max\{ g(x_0, x_1, \ldots, x_n) : x_{i} \in \{ x,y \},~ 0 \leq i \leq n \}$.
\end{itemize}
\end{example}

\begin{proof}
It is easy to show that each function $d$ holds the conditions (1), (2), and (3) in Definition \ref{D:distance}.
We show that each function holds the triangle inequality (4) in Definition \ref{D:distance}.
\begin{itemize}
   \item[(1)]
By the condition $(g4)$, it follows that
\begin{align*}
d(x,z) + d(z,y)
&= g(\underbrace{x, \ldots, x}_\text{$s$ times},z, \ldots, z) +  g(\underbrace{z, \ldots, z}_\text{$s$ times},x, \ldots, x) \\
&\quad + g(\underbrace{z, \ldots, z}_\text{$s$ times},y, \ldots, y) +  g(\underbrace{y, \ldots, y}_\text{$s$ times},z, \ldots, z)\\
& \geq
g(\underbrace{x, \ldots, x}_\text{$s$ times},y, \ldots, y) +  g(\underbrace{y, \ldots, y}_\text{$s$ times},x, \ldots, x) \\
&= d(x,y).
\end{align*}

    \item[(2)] By the condition $(g4)$, it follows that
\begin{align*}
d(x,z) + d(z,y)
&= g(x,z,\ldots, z)+g(x,x,z,\ldots, z) + \cdots + g(x,x,\ldots, x,z)\\
&\quad +g(z,y,\ldots, y)+g(z,z,y,\ldots, y) + \cdots + g(z,z,\ldots, z,y)\\
&\geq g(x,y,\ldots, y)+g(x,x,y,\ldots, y) + \cdots + g(x,x,\ldots, x,y)\\
&= d(x,y).
\end{align*}

    \item[(3)] Let $x, y \in \Omega$. Since $d(x,y)=0$ for $x=y$, the function $d$ holds the triangle inequality. If $x\neq y$, then there exists $\alpha_0, \alpha_1, \ldots, \alpha_n \in \{ x,y \}$ such that $d(x,y)=g(\alpha_0, \alpha_1, \ldots, \alpha_n)$ and $$g(\alpha_0, \alpha_1, \ldots, \alpha_n) \geq g(\beta_0, \beta_1, \ldots, \beta_n)$$ for all $\beta_0, \beta_1, \ldots, \beta_n \in  \{ x,y \} $.
Let $A = \{i: \alpha_i =x \}$. Clearly, $1\leq n(A) \leq n$. Take $d= n(A)$.
Without loss of generality,  we assume that $\alpha_0 = \cdots = \alpha_d = x$ and $\alpha_{d+1} = \cdots = \alpha_{n} = y$.
That is, $d(x,y) = g(\underbrace{x,x,\dots,x}_\text{$d$ times}, y,y, \ldots,y)$.
 Then by the condition $(g4)$ it follows that
 \begin{align*}
 d(x,z) + d(z,y)
 & \geq g(\underbrace{x,x,\dots,x}_\text{$d$ times},z,z,\ldots,z) + g(\underbrace{z,z,\ldots, z}_\text{$d$ times}, y,y,\ldots,y)\\
 & \geq g(\underbrace{x,x,\dots,x}_\text{$d$ times},y,y,\ldots,y) = d(x,y).
 \end{align*}

\end{itemize}
\end{proof}

\section{Topology on a $g$-metric space}

For a given metric space $(\Omega,d)$, we denote the ball centered at $x_0$ with radius $r$ by $B_{d}(x_0,r).$
We define a ball on a $g$-metric space.
\begin{definition}
Let $(\Omega,g)$ be a $g$-metric space. For
$x_0 \in \Omega$ and $r>0$, the $g$-ball centered at $x_0$ with radius $r$ is
\begin{equation*}
B_g(x_0,r) = \{y\in \Omega: g(x_0, y, y, \ldots, y)<r \}.
\end{equation*}
\end{definition}

\begin{proposition}
Let $(\Omega,g)$ be a $g$-metric space. Then the following hold.
\begin{itemize}
\item[(1)] If $g(x_0,x_1,x_2,\ldots,x_n)<r$ and $n(\{x_0,x_1,x_2,\ldots,x_n\}) \geq 3$, then $x_i \in B_g(x_0,r)$ for all $i=0,\ldots,n$.
\item[(2)] If $g$ is multiplicity-independent and $g(x_0,x_1,x_2,\ldots,x_n)<r,$ then $x_i \in B_g(x_0,r)$ for all $i=0,\ldots,n$.
\item[(3)] Let $y \in B_g(x_1,r_1) \cap B_g(x_2,r_2).$ Then there exists $\delta > 0$ such that $B_g(y, \delta) \subseteq B_g(x_1,r_1) \cap B_g(x_2,r_2).$
\end{itemize}
\end{proposition}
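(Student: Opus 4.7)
The plan is to handle the three claims separately, each by a short deduction from the axioms $(g1)$--$(g4)$ plus the additional structure provided by multiplicity-independence in (2).

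For (1), the natural move is monotonicity $(g3)$ applied to the inclusion $\{x_0,x_i\}\subseteq\{x_0,x_1,\ldots,x_n\}$. I first dispose of the degenerate case: if $x_0=x_i$, then $g(x_0,x_i,\ldots,x_i)=0<r$ by $(g1)$. Otherwise $\{x_0,x_i\}$ has exactly two elements, and since the hypothesis $n(\{x_0,\ldots,x_n\})\geq 3$ forces the ambient set to have strictly more, the inclusion is proper. Then $(g3)$ gives $g(x_0,x_i,\ldots,x_i)\leq g(x_0,x_1,\ldots,x_n)<r$, so $x_i\in B_g(x_0,r)$. The $\geq 3$ hypothesis is used precisely to upgrade $\subseteq$ to $\subsetneq$.

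For (2), multiplicity-independence removes the need for that strict inclusion, since by the remark following Definition \ref{D:g-metric space} a multiplicity-independent $g$-metric satisfies the monotonicity condition with plain $\subseteq$. The same argument as in (1) then goes through unconditionally: $\{x_0,x_i\}\subseteq\{x_0,\ldots,x_n\}$ already yields $g(x_0,x_i,\ldots,x_i)\leq g(x_0,\ldots,x_n)<r$.

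For (3) I would mimic the classical open-ball argument. Set $\delta_i:=r_i-g(x_i,y,\ldots,y)$ for $i=1,2$; these are positive because $y\in B_g(x_i,r_i)$. Put $\delta:=\min\{\delta_1,\delta_2\}$ and take any $z\in B_g(y,\delta)$. The key step is to apply the triangle inequality $(g4)$ with $s=0$, $t=n-1$, $x_0=x_i$, $y_0=\cdots=y_{n-1}=z$, and $w=y$; after invoking permutation invariance $(g2)$ on the second summand this yields
\[
g(x_i,z,\ldots,z)\;\leq\;g(x_i,y,\ldots,y)+g(y,z,\ldots,z)\;<\;g(x_i,y,\ldots,y)+\delta_i\;=\;r_i,
\]
so $z\in B_g(x_i,r_i)$ for both $i=1,2$, giving $B_g(y,\delta)\subseteq B_g(x_1,r_1)\cap B_g(x_2,r_2)$.

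The only place that needs real care is bookkeeping the multiplicities in the $(g4)$ instance used for (3): one has to check that with $s=0$ and $t=n-1$ the right-hand side of $(g4)$ produces exactly $g(x_i,y,\ldots,y)$ and a term of the form $g(z,\ldots,z,y)$ that permutation invariance converts into $g(y,z,\ldots,z)$. Once that index count is verified, the rest is routine. The mild subtlety in (1) is remembering that $(g3)$ requires a \emph{strict} inclusion, which is exactly what the $n(\cdot)\geq 3$ hypothesis supplies.
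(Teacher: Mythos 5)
Your proof is correct and follows essentially the same route as the paper's: monotonicity $(g3)$ applied to $\{x_0,x_i\}\subsetneq\{x_0,\ldots,x_n\}$ for (1), the same inequality upgraded to non-strict inclusions under multiplicity-independence for (2), and for (3) the choice $\delta=\min_{i=1,2}\{r_i-g(x_i,y,\ldots,y)\}$ together with the triangle inequality $g(x_i,z,\ldots,z)\leq g(x_i,y,\ldots,y)+g(y,z,\ldots,z)$, which is exactly the paper's appeal to Theorem \ref{thm:g-metric:basic-properties} (2). One small nitpick: in (2) you invoke the remark after the definition of a $g$-metric for the converse of what it actually asserts (the remark says strengthened monotonicity implies multiplicity-independence), but the implication you need --- that multiplicity-independence together with $(g3)$ yields monotonicity for non-strict inclusions --- is immediate (equal underlying sets give equal $g$-values, proper inclusion is covered by $(g3)$), so the argument stands.
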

\begin{proof}
Suppose that $g(x_0,x_1,x_2,\ldots,x_n)<r$. Set $X=\{ x_0,x_1,x_2,\ldots,x_n \}$.
\begin{itemize}
\item[(1)] Since $n(X) \geq 3$, clearly $\{ x_0, x_i, x_i, \ldots, x_i \} \subsetneq X$ for each $i\in \N$. By monotonicity condition for the $g$-metric, it follows that
$g(x_0, x_i, \ldots, x_i) \leq g(x_0,x_1,\ldots,x_n) <r$. So $x_i \in B_g(x_0,r)$ for all $i\in \N$.
\item[(2)] It suffices to show that it holds for $n(X) = 2$. Since a $g$-metric is multiplicity-independent,
$g(x_0, x_i,  \ldots, x_i) \leq g(x_0,x_1,\ldots,x_n) <r$.
\item[(3)] Since $y \in B_g(x_1,r_1) \cap B_g(x_2,r_2),$ it holds that $g(x_i, y, \ldots, y)<r_i$ for $i=1,2$. We take $\delta = \min\{r_i - g(x_i, y, \ldots, y):i=1,2\}.$ Then for every $z \in B_g(y, \delta),$ by Theorem \ref{thm:g-metric:basic-properties} (2) we have $g(x_i, z, \ldots, z) \leq g(x_i, y, \ldots, y) + g(y, z, \ldots, z) < g(x_i, y, \ldots, y) + \delta < r_i$ for each $i=1,2.$ Therefore, $B_g(y, \delta) \subseteq B_g(x_1,r_1) \cap B_g(x_2,r_2).$
\end{itemize}
\end{proof}

Due to the preceding proposition, the collection of all $g$-balls, $\mathcal{B}=\{B_g(x,r) : x\in  \Omega, r>0  \}$ forms a basis for a topology on $\Omega.$ We call the topology generated by $\mathcal{B}$ the \emph{$g$-metric topology} on $\Omega.$

\begin{theorem}
Let $(\Omega,g)$ be a $g$-metric space and let $d(x,y)=g(x,y,\ldots,y) + g(y,x,\ldots,x)$. Then
\begin{equation*}
   B_g \Big(x_0,\frac{r}{n+1} \Big) \subseteq B_{d}(x_0,r) \subseteq B_g(x_0,r).
\end{equation*}
\end{theorem}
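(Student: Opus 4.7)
The plan is to verify the two inclusions separately, both by short computations reducing to Theorem \ref{thm:g-metric:basic-properties}. The right inclusion $B_d(x_0,r) \subseteq B_g(x_0,r)$ is essentially tautological: since $g$ takes values in $\R_+$, for any $y$ we have
$$ g(x_0,y,\ldots,y) \leq g(x_0,y,\ldots,y) + g(y,x_0,\ldots,x_0) = d(x_0,y). $$
Hence $d(x_0,y) < r$ implies $g(x_0,y,\ldots,y) < r$, i.e.\ $y \in B_g(x_0,r)$.

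For the left inclusion $B_g(x_0,\tfrac{r}{n+1}) \subseteq B_d(x_0,r)$, the idea is to bound the "reversed" term $g(y,x_0,\ldots,x_0)$ by a multiple of $g(x_0,y,\ldots,y)$. By the permutation invariance $(g2)$, we may rewrite $g(y,x_0,\ldots,x_0) = g(x_0,\ldots,x_0,y)$, where there are $n$ copies of $x_0$. Applying Theorem \ref{thm:g-metric:basic-properties}(3) with $s=n$ (and with the roles $x \leftrightarrow x_0$, $w \leftrightarrow y$) yields
$$ g(\underbrace{x_0,\ldots,x_0}_{n\text{ times}},y) \leq n \, g(x_0,y,\ldots,y). $$
Combining this with the trivial estimate $g(x_0,y,\ldots,y) \leq g(x_0,y,\ldots,y)$ gives
$$ d(x_0,y) = g(x_0,y,\ldots,y) + g(y,x_0,\ldots,x_0) \leq (n+1)\,g(x_0,y,\ldots,y). $$
So if $y \in B_g(x_0,\tfrac{r}{n+1})$, then $g(x_0,y,\ldots,y) < \tfrac{r}{n+1}$, and the above chain yields $d(x_0,y) < r$, so $y \in B_d(x_0,r)$.

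There is no real obstacle here; the only point requiring care is keeping track of how many copies of each argument appear in the $g$-metric, so that we invoke Theorem \ref{thm:g-metric:basic-properties}(3) with the correct index $s=n$ (which forces the sharp factor $n+1$ in the bound).
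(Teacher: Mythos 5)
Your proof is correct and follows essentially the same route as the paper: the left inclusion rests on exactly the same bound $g(y,x_0,\ldots,x_0) \leq n\,g(x_0,y,\ldots,y)$ obtained from Theorem \ref{thm:g-metric:basic-properties}(3) via permutation invariance. For the right inclusion you simply drop the nonnegative term $g(y,x_0,\ldots,x_0)$, which is a valid (and slightly cleaner) shortcut compared with the paper's argument, which first derives $\tfrac{n+1}{n}\,g(x_0,y,\ldots,y) < r$ before concluding.
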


\begin{proof}
Recall that $y\in B_g(x_0,r) \Longleftrightarrow g(x_0,y,y,\ldots,y) < r$.

(i) Let $\displaystyle{x \in  B_g\Big(x_0,\frac{r}{n+1} \Big)} $. Then $\displaystyle{g(x_0,x,x,\ldots,x) < \frac{r}{n+1}}$.
It follows that
\begin{align*}
d(x_0,x)
&= g(x_0,x,x,\ldots,x) + g(x, x_0,x_0, \ldots, x_0)\\
&\leq g(x_0,x,x,\ldots,x) + ng(x_0, x,x, \ldots, x)\\
&\leq (n+1)g(x_0,x,x,\ldots,x) < r.
\end{align*}
So, $x \in B_{d}(x_0,r)$.

(ii) Let $x \in  B_{d}(x_0,r) $. Then $d(x_0,x) = g(x_0,x,x,\ldots,x) +  g(x,x_0,x_0,\ldots,x_0)< r$.
Since $g(x_0,x,x,\ldots,x) \leq n g(x,x_0,x_0,\ldots,x_0) $,
it follows that
$$ \frac{n+1}{n} g(x_0,x,x,\ldots,x) \leq  g(x_0,x,x,\ldots,x) +  g(x,x_0,x_0,\ldots,x_0) < r.$$
Thus, $g(x_0,x,x,\ldots,x) < r,$ i.e., $x \in B_g(x_0,r)$ as desired.
\end{proof}

Thus, every $g$-metric space is topologically equivalent to a metric space arising from the metric $d$. This makes it possible to transport many concepts and results from metric spaces into the $g$-metric setting.

\section{Convergence and continuity in $g$-metric spaces}

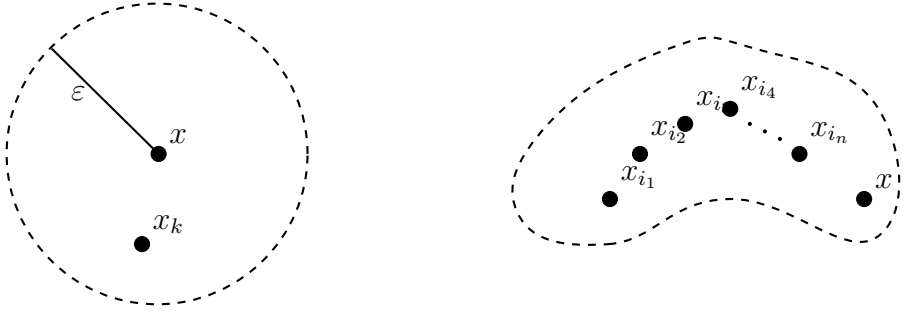
\begin{figure}
\begin{tikzpicture}[scale=2]
\centering
\foreach \Point/\PointLabel in {(2.01,2)/x,(1.9,1.4)/x_k}
\draw[fill=black] \Point circle (0.05) node[above right] {$\PointLabel$};
\draw[thick] (1.3,2.7) --(2.01,2) node[near start,below]{$\varepsilon$};
\draw[thick,dashed] (2,2) circle (1cm);
             \draw[thick,dashed] plot [smooth cycle, tension=0.8] coordinates {(4.4,1.6) (5,1.4) (5.8,1.7) (6.7773,1.4421)(6.7905,2.3074)  (5.9752,2.7) (5.4,2.7) (4.6,2.2) }; 
                          \foreach \Point/\PointLabel in {(5.01,1.7)/x_{i_1},(5.21,2)/x_{i_2},(5.51,2.2)/x_{i_3},(5.81,2.3)/x_{i_4},(6.27,2)/x_{i_n},(6.7,1.7)/x}
\draw[fill=black] \Point circle (0.05) node[above right] {$\PointLabel$};
\draw[fill=black] (5.94,2.2) circle (0.01);
\draw[fill=black] (6.04,2.15) circle (0.01);
\draw[fill=black] (6.14,2.1) circle (0.01);
               \end{tikzpicture}
               \caption{While a convergent sequence is defined by the distance between $x_k$ and $x$ (left), a $g$-convergent sequence is defined by the distance (i.e., $g$-metric) between $x_{i_1},\ldots, x_{i_n}$ and $x$ (right).}\label{fig:ball}
\end{figure}

\begin{definition}\label{def:convergence}
Let $(\Omega,g)$ be a $g$-metric space. Let $x\in \Omega$ be a point and $\{x_k\} \subseteq \Omega$ be a sequence.
\begin{itemize}
    \item[(1)]   $\{x_k\}$ \emph{$g$-converges} to $x$, denoted by $\{x_k\} \overset{g}{\longrightarrow} x$, if for all $\varepsilon >0$ there exists $N\in \N$ such that
    $$i_1,\ldots, i_n \geq N \Longrightarrow g(x,x_{i_1},\ldots, x_{i_{n}})< \varepsilon.$$
  For such a case, $\{x_k\}$ is said to be \emph{$g$-convergent} in $\Omega$ and $x$ is called the \emph{$g$-limit} of $\{x_k\}$. 

    \item[(2)] $\{x_k\}$ is said to be \emph{$g$-Cauchy} if for all $\varepsilon >0$ there exists $N\in \N$ such that
    $$i_0,i_1,\ldots, i_n \geq N \Longrightarrow g(x_{i_0},x_{i_1},\ldots, x_{i_{n}})< \varepsilon.$$

    \item[(3)] $(\Omega,g)$ is \emph{complete} if every $g$-Cauchy sequence in $(\Omega,g)$ is $g$-convergent in $(\Omega,g)$.
\end{itemize}
\end{definition}

\begin{proposition}
The following are true.
\begin{itemize}
    \item[(1)]  The limit of a $g$-convergent sequence in a $g$-metric space is unique.

    \item[(2)] Every convergent sequence in a $g$-metric space is a $g$-Cauchy sequence.
\end{itemize}
\end{proposition}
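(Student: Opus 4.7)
The plan is to derive both statements from the triangle inequality $(g4)$ and the general inequality stated in Theorem \ref{thm:g-metric:basic-properties}(4), namely $g(x_0,\ldots,x_n)\leq\sum_{i=0}^{n}g(x_i,w,\ldots,w)$, together with the permutation invariance $(g2)$ and the ``one-versus-many'' comparison from Theorem \ref{thm:g-metric:basic-properties}(7). The common trick in both parts is that the definition of $g$-convergence only gives us control over expressions of the form $g(x,x_{i_1},\ldots,x_{i_n})$ (one ``limit'' slot, $n$ ``sequence'' slots), so I will always specialize by taking all the free indices equal to a single $k\geq N$.

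For part (1), suppose $\{x_k\}\overset{g}{\longrightarrow}x$ and $\{x_k\}\overset{g}{\longrightarrow}y$. Fix $\varepsilon>0$ and choose $N$ so that, for every $k\geq N$, both $g(x,x_k,\ldots,x_k)<\varepsilon$ and $g(y,x_k,\ldots,x_k)<\varepsilon$ (apply the definition with $i_1=\cdots=i_n=k$ to each limit). I then apply Theorem \ref{thm:g-metric:basic-properties}(4) with $x_0=x$, $x_1=\cdots=x_n=y$, and $w=x_k$ to obtain
\[
g(x,y,\ldots,y)\leq g(x,x_k,\ldots,x_k)+n\,g(y,x_k,\ldots,x_k)<(n+1)\varepsilon.
\]
Since $\varepsilon$ is arbitrary, $g(x,y,\ldots,y)=0$, and $(g1)$ forces $x=y$.

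For part (2), let $\{x_k\}\overset{g}{\longrightarrow}x$ and fix $\varepsilon>0$. Choose $N$ so that $g(x,x_{i_1},\ldots,x_{i_n})<\varepsilon/(n(n+1))$ whenever $i_1,\ldots,i_n\geq N$. Given $i_0,\ldots,i_n\geq N$, Theorem \ref{thm:g-metric:basic-properties}(4) with $w=x$ gives
\[
g(x_{i_0},x_{i_1},\ldots,x_{i_n})\leq\sum_{j=0}^{n}g(x_{i_j},x,\ldots,x).
\]
Each summand is of the ``wrong'' shape for the convergence hypothesis, which is exactly where Theorem \ref{thm:g-metric:basic-properties}(7) with $s=n$ bridges the gap: $g(x_{i_j},x,\ldots,x)\leq n\,g(x_{i_j},\ldots,x_{i_j},x)$, and by $(g2)$ the right-hand side equals $n\,g(x,x_{i_j},\ldots,x_{i_j})$, which is less than $n\cdot\varepsilon/(n(n+1))$ by the choice of $N$ (taking all free indices equal to $i_j$). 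Summing the $n+1$ terms yields $g(x_{i_0},\ldots,x_{i_n})<\varepsilon$, which is the $g$-Cauchy condition.

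The only genuine obstacle is the asymmetry between the convergence condition (one limit, many sequence terms) and the expressions that naturally appear after invoking the triangle inequality (one sequence term, many limits). I expect that without the ``one-versus-many'' comparison in Theorem \ref{thm:g-metric:basic-properties}(7) one could not pass directly from convergence to a Cauchy-type bound, so that inequality is the load-bearing step in part (2); part (1) is more elementary because applying Theorem \ref{thm:g-metric:basic-properties}(4) with $w=x_k$ already places both limits in the ``preferred'' slot.
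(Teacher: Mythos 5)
Your proof is correct and follows essentially the same route as the paper: both parts rest on the sum inequality of Theorem \ref{thm:g-metric:basic-properties} (4) applied through the common point ($w=x_k$ in part (1), $w=x$ in part (2)), followed by the positive definiteness $(g1)$. In part (2) you are in fact more careful than the paper, which bounds each term $g(x_{i_k},x,\ldots,x)$ by $\varepsilon/(n+1)$ with only a vague appeal to ``monotonicity''; your explicit use of Theorem \ref{thm:g-metric:basic-properties} (7) (equivalently the second inequality of part (3) with $s=1$) to convert $g(x_{i_j},x,\ldots,x)$ into the controlled shape $n\,g(x,x_{i_j},\ldots,x_{i_j})$, with the extra factor absorbed by starting from $\varepsilon/(n(n+1))$, is exactly the right way to close that slot-asymmetry gap.
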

\begin{proof}
\begin{itemize}
    \item[(1)] Let $(\Omega,g)$ be a $g$-metric space and let $\{x_k\} \subseteq \Omega$ be a $g$-convergent sequence.
    Suppose that $x,y \in \Omega$ are the $g$-limits of $\{x_k\}$. By Definition \ref{def:convergence} (1), there exists $N_1,N_2\in \N$ such that
    \begin{align*}
        g(x,x_{i_1},\ldots, x_{i_{n}}) &< \frac{\varepsilon}{n+1} \quad \text{for all }i_1,\ldots, i_n \geq N_1,\\
         g(y,x_{i_1},\ldots, x_{i_{n}}) &< \frac{\varepsilon}{n+1} \quad \text{for all }i_1,\ldots, i_n \geq N_2.
    \end{align*}
    Set $N=\max\{N_1,N_2\}$. If $m \geq N$, then by the condition $(g4)$ and Theorem \ref{thm:g-metric:basic-properties} (3), it follows that
    \begin{align*}
        g(x,y,y,\ldots,y) &\leq g(x,x_m, x_m, \ldots, x_m) + g(x_m, y, y, \ldots, y)\\
        &\leq g(x,x_m, x_m, \ldots, x_m) + ng(y, x_m, x_m, \ldots, x_m)\\
        &< \frac{\varepsilon}{n+1} + \frac{n\varepsilon}{n+1} = \varepsilon.
    \end{align*}
    Since $\varepsilon$ is arbitrary, $g(x,y,y,\ldots,y)=0$. Thus, $x=y$ by the condition $(g1)$.

    \item[(2)]
    Let $(\Omega,g)$ be a $g$-metric space and let $\{x_k\} \subseteq \Omega$ be a convergent sequence with the $g$-limit $x$. By Definition \ref{def:convergence} (1), there exists $N\in \N$ such that
    \begin{align*}
         g(x,x_{i_1},\ldots, x_{i_{n}}) &< \frac{\varepsilon}{n+1} \quad \text{for all }i_1,\ldots, i_n \geq N.
    \end{align*}
    By Theorem \ref{thm:g-metric:basic-properties} (4) and the monotonicity condition for the $g$-metric, it follows that
    \begin{align*}
        g(x_{i_0},x_{i_1},\ldots, x_{i_{n}}) \leq \sum_{k=0}^n g(x_{i_k},x,x,\ldots, x) <  \sum_{k=0}^n \frac{\varepsilon}{n+1} = \varepsilon.
    \end{align*}
    Thus, $\{x_k\}$ is a $g$-Cauchy sequence in $(\Omega, g)$.
\end{itemize}
\end{proof}

\begin{lemma}\label{lemma:g-convergent}
Let $(\Omega,g)$ be a $g$-metric space. Let $\{x_k\} \subseteq \Omega$ be a sequence and $x\in \Omega$.
The following are equivalent.
\begin{itemize}
\item[(1)]  $\{x_k\} \overset{g}{\longrightarrow} x$.
\item[(2)] For a given $\varepsilon>0 $, there exists $N \in \N$ such that $x_k \in B_g(x,\varepsilon)$ for all $k \geq N$.
\item[(3)] $\displaystyle{\lim_{k_1,\ldots, k_s \rightarrow \infty} g(\underbrace{x_{k_1}, \ldots, x_{k_s}}_\text{$s$ times},x,\ldots,x) =0}$ for a fixed $1 \leq s \leq n$. That is, for all $\varepsilon>0$, there exists $N \in \N$ such that $k_1,\ldots, k_s  \geq N$ $\Longrightarrow$ $g(x_{k_1},\ldots, x_{k_s},x,\ldots,x) < \varepsilon$.
\end{itemize}
\end{lemma}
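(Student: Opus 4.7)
The plan is to prove the cycle $(1) \Rightarrow (2) \Rightarrow (1)$ directly, and then the separate equivalence $(2) \Leftrightarrow (3)$, drawing on the inequalities already collected in Theorem~\ref{thm:g-metric:basic-properties}. The direction $(1) \Rightarrow (2)$ is immediate: specializing the defining condition of (1) to $i_{1} = \cdots = i_{n} = k$ gives $g(x, x_{k}, \ldots, x_{k}) < \varepsilon$ for all $k \geq N$, which is exactly the statement that $x_{k} \in B_{g}(x, \varepsilon)$.

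For $(2) \Rightarrow (1)$, I would use the subadditivity bound of Theorem~\ref{thm:g-metric:basic-properties}~(4) with $w = x$, and then convert the one-point terms that appear into the form controlled by (2) using the second inequality of Theorem~\ref{thm:g-metric:basic-properties}~(3) (applied with $s = 1$). Concretely,
\[
g(x, x_{i_{1}}, \ldots, x_{i_{n}}) \leq \sum_{j=1}^{n} g(x_{i_{j}}, x, \ldots, x) \leq n \sum_{j=1}^{n} g(x, x_{i_{j}}, \ldots, x_{i_{j}}),
\]
where the vanishing term $g(x, x, \ldots, x) = 0$ is discarded using $(g1)$. Given $\varepsilon > 0$, applying (2) with threshold $\varepsilon / n^{2}$ then yields (1).

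The more delicate equivalence is $(2) \Leftrightarrow (3)$. For $(2) \Rightarrow (3)$, the same application of Theorem~\ref{thm:g-metric:basic-properties}~(4) with $w = x$ to the tuple $(x_{k_{1}}, \ldots, x_{k_{s}}, x, \ldots, x)$ works verbatim: the $n{+}1{-}s$ copies of $x$ contribute nothing, and the remaining $s$ terms are bounded exactly as above. The main obstacle is the converse $(3) \Rightarrow (2)$, because one must move between two different multiplicity patterns: from $s$ copies of the varying index (obtained by setting $k_{1} = \cdots = k_{s} = k$) to the pattern appearing in (2) with a single copy of $x$ and $n$ copies of $x_{k}$. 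This mismatch is precisely what Theorem~\ref{thm:g-metric:basic-properties}~(7) is designed to bridge. Applying it with the roles $x \leftrightarrow x$ (fixed point) and $w \leftrightarrow x_{k}$, and with the parameter $n{+}1{-}s$ in place of $s$, then invoking permutation invariance, yields
\[
g(x, x_{k}, \ldots, x_{k}) \leq \bigl(1 + s(n-s)\bigr)\, g\bigl(\underbrace{x_{k}, \ldots, x_{k}}_{s},\, \underbrace{x, \ldots, x}_{n+1-s}\bigr).
\]
Choosing the threshold in (3) to be $\varepsilon / (1 + s(n-s))$ then delivers (2). The bookkeeping required in substituting the right parameter into Theorem~\ref{thm:g-metric:basic-properties}~(7) and tracking permutations is the step where care is most needed, but no further creative input is required beyond what is already in Theorem~\ref{thm:g-metric:basic-properties}.
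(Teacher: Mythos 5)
Your proof is correct and follows essentially the same route as the paper's: Theorem~\ref{thm:g-metric:basic-properties}~(4) (with $w=x$) together with (3) for the directions toward larger index sets, and Theorem~\ref{thm:g-metric:basic-properties}~(7) to pass from the pattern in (3) back to the ball condition in (2). In fact you are more careful than the paper at two points: you supply the factor of $n$ from Theorem~\ref{thm:g-metric:basic-properties}~(3) that is needed after invoking (4), and your substitution of $n+1-s$ for $s$ in Theorem~\ref{thm:g-metric:basic-properties}~(7), giving the constant $1+s(n-s)$, correctly matches the multiplicity pattern of (3), whereas the paper applies (7) with the unadjusted parameter.
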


\begin{proof}
($(1) \Longleftrightarrow (2)$) It is clear by the definition of $g$-convergence.

($(2) \Longrightarrow (3)$) Assume that for a given $\varepsilon > 0,$ there exists $N \in \N$ such that $k \geq N$ implies $\displaystyle{x_{k} \in B_{g} \Big( x, \frac{\varepsilon}{s} \Big)},$ i.e., $\displaystyle{g(x, x_{k}, \ldots, x_{k}) < \frac{\varepsilon}{s}}$.
If $k_1,\ldots, k_s  \geq N$, then by Theorem \ref{thm:g-metric:basic-properties} (4), we have that
$\displaystyle{g(x_{k_1},\ldots, x_{k_s},x,\ldots,x)  \leq \sum_{j=1}^s g(x,x_{k_j},\ldots, x_{k_j}) < \varepsilon }.$

($(3) \Longrightarrow (2)$) Let $\varepsilon>0$. Assume that there exists $N \in \N$ such that
$$k_1,\ldots, k_s  \geq N \Longrightarrow g(k_1,\ldots, k_s,x,\ldots,x) < \frac{\varepsilon}{(1+(s-1)(n+1-s))}.$$
If $k \geq N$, then by Theorem \ref{thm:g-metric:basic-properties} (7) it follows that
\begin{align*}
g(x,x_k,\ldots, x_k)
 \leq (1+(s-1)(n+1-s)) g(\underbrace{x_k,\ldots, x_k}_\text{$s$ times},x,\ldots,x) < \varepsilon.
\end{align*}

\end{proof}


\begin{lemma}\label{lemma:g-cauchy}
Let $(\Omega,g)$ be a $g$-metric space. Let $\{x_k\} \subseteq \Omega$ be a sequence.
The following are equivalent.
\begin{itemize}
\item[(1)] $\{x_k\}$ is $g$-Cauchy.
\item[(2)] $g(x_k, x_{k+1}, x_{k+1},\ldots, x_{k+1}) \longrightarrow 0$ as $k \longrightarrow \infty.$
\item[(3)] $\displaystyle{\lim_{k,\ell \rightarrow \infty} g(\underbrace{x_k,\ldots,x_k}_\text{$s$ times},x_\ell,\ldots,x_\ell) =0}$ for a fixed $1 \leq s \leq n $.
\end{itemize}
\end{lemma}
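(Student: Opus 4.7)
My plan is to establish $(1) \Rightarrow (2)$ and $(1) \Leftrightarrow (3)$ by direct specialization of indices, and then to close the cycle with $(2) \Rightarrow (1)$ via the triangle--inequality tools collected in Theorem~\ref{thm:g-metric:basic-properties}. Structurally the argument should parallel Lemma~\ref{lemma:g-convergent}.

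For the easy directions: $(1) \Rightarrow (2)$ follows by choosing $(i_0, i_1, \ldots, i_n) = (k, k+1, \ldots, k+1)$ in the $g$-Cauchy condition, and $(1) \Rightarrow (3)$ by taking $s$ of the indices equal to $k$ and the remaining $n+1-s$ equal to $\ell$. For $(3) \Rightarrow (1)$ I would anchor the bound in Theorem~\ref{thm:g-metric:basic-properties}~(4) at $w = x_{i_0}$,
\[
g(x_{i_0}, x_{i_1}, \ldots, x_{i_n}) \leq \sum_{j=0}^n g(x_{i_j}, x_{i_0}, \ldots, x_{i_0}),
\]
and bound each summand by $\varepsilon/(n+1)$ using $(3)$ in its $s=1$ form. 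The $s=1$ and general $s$ formulations of $(3)$ are interchangeable via Theorem~\ref{thm:g-metric:basic-properties}~(3) (upward, with factor $s$) and Theorem~\ref{thm:g-metric:basic-properties}~(7) (downward, with factor $1+(s-1)(n+1-s)$); specializing $\ell = k+1$ in the $s=1$ version then yields $(2)$.

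The crux is $(2) \Rightarrow (1)$, which I would route through $(2) \Rightarrow (3)$ with $s=1$ by iterating $(g4)$ to obtain the telescoping bound
\[
g(x_k, x_\ell, \ldots, x_\ell) \leq g(x_k, x_{k+1}, \ldots, x_{k+1}) + g(x_{k+1}, x_\ell, \ldots, x_\ell) \leq \sum_{r=k}^{\ell-1} g(x_r, x_{r+1}, \ldots, x_{r+1}),
\]
and then choosing $N$ large enough that this telescope is controlled. The obstacle I expect to be hardest is verifying that the telescope is uniformly small in $\ell - k$, since pointwise decay of the consecutive terms is not by itself enough to bound a finite but unbounded sum---this is exactly the reason $d(x_n, x_{n+1}) \to 0$ does not imply Cauchy in an ordinary metric. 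My plan is to exploit monotonicity $(g3)$ to dominate the telescope by a single $g$-value on the enlarged index set $\{x_k, x_{k+1}, \ldots, x_\ell\}$, in the spirit of the Mustafa--Sims $G$-metric argument, and then transfer this back to $g(x_k, x_\ell, \ldots, x_\ell)$ by a final application of Theorem~\ref{thm:g-metric:basic-properties}~(7); this is the step to which I would devote the most care.
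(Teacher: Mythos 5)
Your easy directions are fine and coincide with the paper's: $(1)\Rightarrow(2)$ and $(1)\Rightarrow(3)$ by specializing indices, and $(3)\Rightarrow(1)$ by anchoring Theorem~\ref{thm:g-metric:basic-properties}~(4) at $w=x_{i_0}$ and passing between the $s=1$ and general-$s$ forms via parts (3) and (7). You have also put your finger on exactly the right obstacle in $(2)\Rightarrow(1)$: the telescope $\sum_{r=k}^{\ell-1} g(x_r,x_{r+1},\ldots,x_{r+1})$ has an unbounded number of terms, and pointwise decay of the consecutive terms does not control it.

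However, your proposed repair cannot close this gap. Monotonicity $(g3)$ compares two $(n+1)$-tuples whose underlying sets are nested; it cannot absorb a sum of $\ell-k$ terms into a single $g$-value, the ``enlarged index set'' $\{x_k,\ldots,x_\ell\}$ has more than $n+1$ elements so $g$ is not even defined on it, and in any case $(g3)$ points the wrong way (the value on the larger set dominates each summand, not their sum, and you have no control over it). In fact no repair is possible, because $(2)\Rightarrow(1)$ is false as stated: for $n=1$ the lemma reduces to the claim that $d(x_k,x_{k+1})\to 0$ implies Cauchy, refuted by $x_k=\sum_{i=1}^{k}1/i$ in $\mathbb{R}$, and the counterexample lifts to any order via the max $g$-metric of Example~\ref{Ex:max g-metric}, for which $(2)$ again reads $d(x_k,x_{k+1})\to 0$ while $g$-Cauchy is ordinary Cauchy. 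You should also know that the paper's own proof of $(2)\Rightarrow(3)$ commits precisely the error you were worried about: it chooses the threshold $N$ so that each consecutive term is below $\varepsilon/(n(\ell-k))$, making $N$ depend on $\ell-k$, which is unbounded as $k,\ell\geq N$ vary. So your diagnosis of where the difficulty lies is correct; the honest conclusion is that this implication must be dropped or its hypothesis strengthened (e.g.\ to summability or geometric decay of $g(x_k,x_{k+1},\ldots,x_{k+1})$, which is all that the fixed point theorems of Section~5 actually use).
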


\begin{proof}
($(1) \Longrightarrow (2)$) It is trivial by Definition \ref{def:convergence} (2).

($(2) \Longrightarrow (3)$) Without loss of generality, we can assume $k < \ell.$ Let $\varepsilon > 0$ be given. Then for each $m=0, \ldots, \ell-k-1$ there exists $N_{m} \in \N$ such that $\displaystyle{g(x_{k+m},x_{k+m+1}, \ldots, x_{k+m+1}) < \frac{\varepsilon}{n(\ell-k)} }$.  Let $N=\text{max}\{N_{0},\ldots, N_{\ell-k-1}\}.$ Then by Theorem \ref{thm:g-metric:basic-properties} (3),(4), and the conditions $(g4)$,  we have that
\begin{align*}
g(\underbrace{x_k,\ldots,x_k}_\text{$s$ times}, & x_\ell,\ldots,x_\ell)
 \leq s g(x_{k},x_{\ell},\ldots,x_{\ell}) \\
&\leq s \big(g(x_{k},x_{k+1},\ldots,x_{k+1}) + g(x_{k+1},x_{\ell},\ldots,x_{\ell}) \big) \\
&\leq s \big(g(x_{k},x_{k+1},\ldots,x_{k+1}) + g(x_{k+1},x_{k+2},\ldots,x_{k+2})+ g(x_{k+2},x_{\ell},\ldots,x_{\ell}) \big) \\
& \quad \vdots \\
&\leq s\sum\limits_{i=k}^{\ell-1} g(x_{i},x_{i+1},\ldots,x_{i+1})< \varepsilon,
 \end{align*}
for all $k \geq N$.
If $k,\ell \geq N$, then $g(\underbrace{x_k,\ldots,x_k}_\text{$s$ times}x_\ell,\ldots,x_\ell)< \varepsilon $.

($(3) \Longrightarrow (1)$)
Let $\varepsilon > 0$ be given.
Assume that  there exists $N \in\N$ such that
$$k,\ell \geq N ~ \Longrightarrow ~ g(\underbrace{x_{k},\ldots,x_{k}}_\text{$s$ times},x_{\ell},\ldots,x_{\ell}) < \frac{\varepsilon}{n(1+(s+1)(n+1-s))}.$$
If $i_0,i_1,\ldots,i_n \geq N$, then by Theorem \ref{thm:g-metric:basic-properties} (4),(7) it follows that
\begin{align*}
g(x_{i_0},x_{i_1},\ldots, x_{i_{n}})
&\leq  \sum_{k=0}^n g(x_{i_k},x_{i_0},\ldots, x_{i_{0}}) \\
&\leq  \sum_{k=0}^n (1+(s+1)(n+1-s))  g(\underbrace{x_{i_k},\ldots,x_{i_k}}_\text{$s$ times},x_{i_0},\ldots,x_{i_0})
< \varepsilon.
 \end{align*}
 \end{proof}

\begin{definition}
Let $(\Omega,g)$ be a $g$-metric space, and let $\varepsilon>0$ be given.
\begin{itemize}
\item[(1)] A set $A\subseteq \Omega$ is called an \emph{$\varepsilon,g$-net} of $(\Omega,g)$ if for each $x\in \Omega$, there exists $a \in A$ such that $x \in B_g(a,\varepsilon)$.
If the set $A$ is finite then $A$ is called a \emph{finite $\varepsilon,g$-net} of $(\Omega,g)$.
\item[(2)] A $g$-metric space  $(\Omega,g)$ is called \emph{totally $g$-bounded} if for every $\varepsilon>0$ there exists a finite $\varepsilon,g$-net.
\item[(3)] A $g$-metric space  $(\Omega,g)$ is called \emph{$g$-compact} if it is complete and totally $g$-bounded.

\end{itemize}

\end{definition}

\begin{definition}
Let $(\Omega_{1},g_{1})$ and $(\Omega_{2},g_{2})$ be $g$-metric spaces.
\begin{itemize}
  \item[(1)] A mapping $T: \Omega_{1} \longrightarrow \Omega_{2}$ is said to be $g$-\emph{continuous at a point} $x \in \Omega_{1}$ provided that for each open ball $B_{g_{2}}(T(x), \varepsilon),$ there exists an open ball $B_{g_{1}}(x, \delta)$ such that $T(B_{g_{1}}(x, \delta)) \subseteq B_{g_{2}}(T(x), \varepsilon).$
  \item[(2)] $T: \Omega_{1} \longrightarrow \Omega_{2}$ is said to be $g$-\emph{continuous} if it is continuous at every point of $\Omega_{1}.$
  \item[(3)] $T: \Omega_{1} \longrightarrow \Omega_{2}$ is called a $g$-\emph{homeomorphism} if $T$ is bijective, and $T$ and $T^{-1}$ are $g$-continuous. In this case, the spaces $\Omega_{1}$ and $\Omega_{2}$ are said to be $g$-\emph{homeomorphic}.
  \item[(4)] A property $P$ of $g$-metric spaces is called a $g$-\emph{topological invariant} if $P$ satisfies the condition:\\
  If a space $\Omega_{1}$ has the property $P$ and if $\Omega_{1}$ and $\Omega_{2}$ are $g$-homeomorphic, then $\Omega_{2}$ also has the property $P.$
\end{itemize}
\end{definition}

\begin{proposition}
Let $(\Omega_{1},g_{1})$ and $(\Omega_{2},g_{2})$ be $g$-metric spaces, and let $T: \Omega_{1} \longrightarrow \Omega_{2}$ be a mapping. Then the following are equivalent.
\begin{itemize}
  \item[(1)] T is $g$-continuous.
  \item[(2)] For each point $x \in \Omega_{1}$ and for each sequence $\{x_{k}\}$ in $\Omega_{1}$ $g$-converging to $x,$ $\{T(x_{k})\}$ $g$-converges to $T(x).$
\end{itemize}
\end{proposition}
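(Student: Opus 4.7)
The plan is to prove the standard equivalence of topological and sequential continuity by leveraging Lemma \ref{lemma:g-convergent}, which recasts the statement $\{x_k\} \overset{g}{\longrightarrow} x$ into the ball-membership form ``for every $\varepsilon > 0$, eventually $x_k \in B_g(x,\varepsilon)$.'' Because this ball form matches exactly how $g$-continuity is phrased (each $g$-ball around $T(x)$ has a $g$-ball around $x$ mapped into it), both directions will reduce to routine manipulations.

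For the direction $(1) \Rightarrow (2)$, I would fix $x \in \Omega_1$, a sequence $\{x_k\}$ with $\{x_k\} \overset{g}{\longrightarrow} x$, and $\varepsilon > 0$. By $g$-continuity of $T$ at $x$, I pick $\delta > 0$ such that $T(B_{g_1}(x,\delta)) \subseteq B_{g_2}(T(x),\varepsilon)$. By Lemma \ref{lemma:g-convergent} applied to $\{x_k\}$, there exists $N \in \N$ such that $x_k \in B_{g_1}(x,\delta)$ for all $k \geq N$. Then $T(x_k) \in B_{g_2}(T(x),\varepsilon)$ for all $k \geq N$, and Lemma \ref{lemma:g-convergent} applied in the other direction gives $\{T(x_k)\} \overset{g}{\longrightarrow} T(x)$.

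For the direction $(2) \Rightarrow (1)$, I would argue by contraposition. Suppose $T$ fails to be $g$-continuous at some $x \in \Omega_1$. Then there exists $\varepsilon_0 > 0$ such that for every $\delta > 0$, $T(B_{g_1}(x,\delta)) \not\subseteq B_{g_2}(T(x),\varepsilon_0)$. Taking $\delta = 1/k$ for each $k \in \N$, I choose $x_k \in B_{g_1}(x, 1/k)$ with $T(x_k) \notin B_{g_2}(T(x),\varepsilon_0)$. By Lemma \ref{lemma:g-convergent}, $\{x_k\} \overset{g}{\longrightarrow} x$, but $\{T(x_k)\}$ does not $g$-converge to $T(x)$ (since it fails the ball criterion for $\varepsilon_0$), contradicting $(2)$.

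The only subtle point is that $g$-convergence is defined in terms of the multi-variable expression $g(x, x_{i_1}, \ldots, x_{i_n})$ rather than the one-variable expression $g(x, x_k, \ldots, x_k)$ underlying a $g$-ball. Without Lemma \ref{lemma:g-convergent}, one would have to translate between these forms by hand using the monotonicity condition $(g3)$ and parts (3), (4), (7) of Theorem \ref{thm:g-metric:basic-properties}. Since that translation is already packaged into Lemma \ref{lemma:g-convergent}, the remaining argument is essentially the classical proof for ordinary metric spaces, and no genuine obstacle should arise.
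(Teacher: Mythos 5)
Your proposal is correct and follows essentially the same route as the paper: the forward direction pulls back a $\delta$-ball via $g$-continuity and the reverse direction is the standard contrapositive with $\delta = 1/k$. The only difference is that you delegate the translation between the multi-index definition of $g$-convergence and the single-point ball criterion to Lemma \ref{lemma:g-convergent}, whereas the paper redoes that estimate by hand in the forward direction (shrinking the target ball to radius $\varepsilon n^{-2}$ and invoking Theorem \ref{thm:g-metric:basic-properties} (3) and (4)); both are fine.
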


\begin{proof}
($(1) \Longrightarrow (2)$) Let $x \in \Omega_{1},$ and let $\{x_{k}\}$ be a sequence in $\Omega_{1}$ $g$-converging to $x.$ Since $T: \Omega_{1} \longrightarrow \Omega_{2}$ is $g$-continuous, for a given $\varepsilon >0$ there exists $\delta >0$ such that $T(B_{g_{1}}(x, \delta)) \subseteq B_{g_{2}}(T(x), \varepsilon n^{-2}).$ Since $\{x_{k}\} \overset{g}{\longrightarrow} x,$ there is $N \in \mathbb{N}$ such that $g(x, x_{i_{1}}, \ldots, x_{i_{n}}) < \delta$ for all $i_{1}, \ldots, i_{n} \geq N.$ Thus $g(x, x_{i_{k}}, \ldots, x_{i_{k}}) < \delta$ for each $k= 1, \ldots, n$.
Then the $g$-continuity of $T$ gives rise to the inequality
$$g(T(x), T(x_{i_{k}}), \ldots, T(x_{i_{k}})) < \frac{\varepsilon}{n^{2}}$$ for each $k\in \N.$
By Theorem \ref{thm:g-metric:basic-properties} (3) and (4) we have
\begin{align*}
g(T(x), T(x_{i_{1}}), \ldots, T(x_{i_{n}}))
& \leq \sum\limits_{k=1}^{n} g(T(x_{i_{k}}), T(x), \ldots, T(x))\\
& \leq \sum\limits_{k=1}^{n} n g(T(x), T(x_{i_{k}}), \ldots, T(x_{i_{k}})) < \varepsilon.
\end{align*}
Therefore, $\{T(x_{k})\}$ $g$-converges to $T(x).$

($(2) \Longrightarrow (1)$) Suppose that $T$ is not $g$-continuous, i.e. there exists $x \in \Omega_{1}$ such that $T$ is not $g$-continuous at $x.$ Then there exists $\varepsilon > 0$ such that for each $\delta > 0$ there is $y \in \Omega_{1}$ with $g(x, y, \ldots, y) < \delta$ but $g(T(x), T(y), \ldots, T(y)) \geq \varepsilon.$ Then for each $k \in \N$ we can take $x_{k} \in \Omega_{1}$ such that $g(x, x_{k}, \ldots, x_{k}) < \frac{1}{k}$ but $g(T(x), T(x_{k}), \ldots, T(x_{k})) \geq \varepsilon.$ Hence, $\{x_{k}\}$ $g$-converges to $x$ but $\{T(x_{k})\}$ does not $g$-converges to $T(x),$ which contradicts to (2).
\end{proof}


A $g$-metric space $(\Omega,g)$ is said to \emph{have the fixed point property} if every $g$-continuous mapping $T: \Omega \longrightarrow \Omega$ has a fixed point.

\begin{proposition}
The fixed point property is a $g$-topological invariant.
\end{proposition}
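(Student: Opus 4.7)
The plan is the standard conjugation argument adapted to the $g$-metric setting. Suppose $\Omega_1$ has the fixed point property and $h: \Omega_1 \longrightarrow \Omega_2$ is a $g$-homeomorphism. I want to show that an arbitrary $g$-continuous self-map $T: \Omega_2 \longrightarrow \Omega_2$ has a fixed point. The natural candidate is to transport $T$ back to $\Omega_1$ via $h$: set $S := h^{-1} \circ T \circ h: \Omega_1 \longrightarrow \Omega_1$, apply the fixed point property of $\Omega_1$ to obtain $x \in \Omega_1$ with $S(x) = x$, and then verify that $h(x)$ is a fixed point of $T$. The latter is a one-line check: $h^{-1}(T(h(x))) = x$ implies $T(h(x)) = h(x)$.

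The only nontrivial piece is justifying that $S$ is $g$-continuous, i.e.\ that the composition of $g$-continuous maps is $g$-continuous. First I would establish this small lemma: if $f:(\Omega_1,g_1) \to (\Omega_2,g_2)$ and $f':(\Omega_2,g_2) \to (\Omega_3,g_3)$ are $g$-continuous, then so is $f' \circ f$. Given $x \in \Omega_1$ and a $g_3$-ball $B_{g_3}((f' \circ f)(x), \varepsilon)$, $g$-continuity of $f'$ at $f(x)$ supplies $\delta > 0$ with $f'(B_{g_2}(f(x),\delta)) \subseteq B_{g_3}((f' \circ f)(x), \varepsilon)$, and then $g$-continuity of $f$ at $x$ supplies $\eta > 0$ with $f(B_{g_1}(x,\eta)) \subseteq B_{g_2}(f(x),\delta)$; composition gives $(f' \circ f)(B_{g_1}(x,\eta)) \subseteq B_{g_3}((f' \circ f)(x),\varepsilon)$. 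Alternatively, the sequential characterization proved in the preceding proposition gives this instantly: if $\{x_k\} \overset{g}{\longrightarrow} x$, then $\{f(x_k)\} \overset{g}{\longrightarrow} f(x)$, and hence $\{(f' \circ f)(x_k)\} \overset{g}{\longrightarrow} (f' \circ f)(x)$.

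Applying this lemma twice to $h$, $T$, $h^{-1}$ yields $g$-continuity of $S$. Then the fixed point property of $\Omega_1$ produces $x$ with $S(x) = x$, and the identity $T(h(x)) = h(x)$ follows by applying $h$ to both sides of $h^{-1}(T(h(x))) = x$. Since $T$ was an arbitrary $g$-continuous self-map of $\Omega_2$, this proves $\Omega_2$ has the fixed point property, completing the proof. I do not anticipate any real obstacle; the entire content is the fact that $g$-continuity is preserved under composition, which is immediate from either the $\varepsilon$--$\delta$ ball definition or the sequential characterization already established.
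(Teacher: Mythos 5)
Your proposal is correct and follows essentially the same conjugation argument as the paper: form $S = h^{-1}\circ T\circ h$, extract a fixed point in $\Omega_1$, and push it forward through $h$. The only difference is that you explicitly justify that $g$-continuity is preserved under composition, a step the paper takes for granted; your justification (via either the ball definition or the sequential characterization) is valid.
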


\begin{proof}
Let $(\Omega_{1},g_1)$ and $(\Omega_{2},g_2)$ be $g$-metric spaces, and let $h:\Omega_{1} \longrightarrow \Omega_{2}$ be a $g$-homeomorphism. Suppose that $\Omega_{1}$ has the fixed point property.\\
Let $\widetilde{T}:\Omega_{2} \longrightarrow \Omega_{2}$ be a $g$-continuous function. We consider the function $T:\Omega_{1} \longrightarrow \Omega_{1}$ given by $T(x) = (h^{-1} \circ \widetilde{T} \circ h)(x).$ Since $\Omega_{1}$ has the fixed point property and $T$ is $g$-continuous, there exists a fixed point $x \in \Omega_{1}$ under $T,$ i.e. $T(x)=x.$ Denote $h(x)$ by $y.$ Then we have
$$\widetilde{T}(y) = \widetilde{T}(h(x)) = (h \circ h^{-1} \circ \widetilde{T} \circ h)(x) = h(T(x)) = h(x) = y,$$
implying that $y$ is a fixed point under $\widetilde{T}.$ Therefore, $\Omega_{2}$ has the fixed point property.
\end{proof}

\begin{lemma}\label{lemma:g-continuous}
If $(\Omega,g)$ is a $g$-metric space, then the function $g$ is jointly continuous in all $n+1$ variables, i.e.,
if for each $i=0,1,\ldots, n$, $\{x_i^{(k)}\}_{k\in\N}$ is a sequence in $\Omega$ such that
$\{x_i^{(k)}\} \overset{g}{\longrightarrow} x_i$, then
$\{ g(x_0^{(k)},x_1^{(k)},\ldots, x_n^{(k)}) \}{\longrightarrow} \{ g(x_0,x_1,\ldots, x_n) \}$ as $k \longrightarrow \infty$.
\end{lemma}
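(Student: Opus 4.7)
My approach is a telescoping argument that reduces joint continuity to the single-coordinate estimate in Theorem~\ref{thm:g-metric:basic-properties}~(5). The plan is to interpolate between $(x_0^{(k)}, \ldots, x_n^{(k)})$ and $(x_0, \ldots, x_n)$ by swapping one coordinate at a time, then use the one-variable Lipschitz-type bound to control each swap.

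Concretely, I would define, for each $k \in \N$ and $j = 0, 1, \ldots, n+1$,
$$A_j^{(k)} := g\bigl(x_0, \ldots, x_{j-1}, x_j^{(k)}, x_{j+1}^{(k)}, \ldots, x_n^{(k)}\bigr),$$
so that $A_0^{(k)} = g(x_0^{(k)}, \ldots, x_n^{(k)})$ and $A_{n+1}^{(k)} = g(x_0, \ldots, x_n)$. By the triangle inequality for the absolute value on $\R$,
$$\bigl| g(x_0^{(k)}, \ldots, x_n^{(k)}) - g(x_0, \ldots, x_n) \bigr| \leq \sum_{j=0}^{n} \bigl| A_j^{(k)} - A_{j+1}^{(k)} \bigr|.$$

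For each $j$, the tuples defining $A_j^{(k)}$ and $A_{j+1}^{(k)}$ differ only in the $j$-th entry ($x_j^{(k)}$ versus $x_j$). Using the permutation invariance $(g2)$ to move this entry to the first position, and then applying Theorem~\ref{thm:g-metric:basic-properties}~(5) with $y = x_j^{(k)}$ and $w = x_j$, I would obtain
$$\bigl| A_j^{(k)} - A_{j+1}^{(k)} \bigr| \leq \max\bigl\{ g(x_j^{(k)}, x_j, \ldots, x_j),\ g(x_j, x_j^{(k)}, \ldots, x_j^{(k)}) \bigr\}.$$
Since $\{x_j^{(k)}\} \overset{g}{\longrightarrow} x_j$, the second term in the max tends to $0$ directly from Definition~\ref{def:convergence}~(1) with all indices set equal to $k$, and the first term tends to $0$ by the $s = 1$ characterization of $g$-convergence in Lemma~\ref{lemma:g-convergent}~(3) (after one more use of $(g2)$). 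Hence each of the finitely many summands tends to $0$ as $k \to \infty$, giving the desired convergence $g(x_0^{(k)}, \ldots, x_n^{(k)}) \to g(x_0, \ldots, x_n)$.

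The main obstacle is really only bookkeeping rather than a deep difficulty: the key insight is that Theorem~\ref{thm:g-metric:basic-properties}~(5) packages the ``one-coordinate Lipschitz continuity'' of $g$ in the form that makes the telescoping work, while Lemma~\ref{lemma:g-convergent} guarantees that \emph{both} sides of the $\max$-bound vanish in the limit (one needs the $s=1$ characterization as well as the definition itself, since $g(x_j^{(k)}, x_j, \ldots, x_j)$ and $g(x_j, x_j^{(k)}, \ldots, x_j^{(k)})$ are a priori different quantities in the absence of multiplicity-independence).
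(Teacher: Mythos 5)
Your proposal is correct and follows essentially the same route as the paper: both proofs swap one coordinate at a time and control each swap by the triangle inequality, the only difference being that you invoke the prepackaged two-sided bound of Theorem~\ref{thm:g-metric:basic-properties}~(5) while the paper applies $(g2)$ and $(g4)$ directly to get the upper estimate $g(x_0^{(k)},\ldots,x_n^{(k)})\leq \sum_{i=0}^{n} g(x_i^{(k)},x_i,\ldots,x_i)+g(x_0,\ldots,x_n)$ and then repeats the argument for the reverse inequality. Your observation that both entries of the $\max$ must be shown to vanish (via the definition of $g$-convergence and via Lemma~\ref{lemma:g-convergent}~(3), respectively) is a correct and slightly more careful handling of the two directions than the paper's ``in a similar way.''
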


\begin{proof}
Assume that $\{x_{i}^{(k)}\} \overset{g}{\longrightarrow} x_{i}$ as $k \longrightarrow \infty$ for each $i=0, \ldots, n$. For a given $\varepsilon > 0,$ there exists $N_{i} \in \N$ such that
 $\displaystyle{g(x_{i}^{(k)},x_{i}, \ldots, x_{i}) < \frac{\varepsilon}{n+1}}$
  if $k \geq N_{i}$ by Lemma \ref{lemma:g-convergent} (3). We let $N=\text{max}\{N_{0}, N_{1}, \ldots, N_{n}\}.$ Then by the conditions $(g2),(g4)$,  if $k \geq N$, then
\begin{align*}
g(x_0^{(k)},x_1^{(k)},\ldots, x_n^{(k)})
&\leq g(x_0^{(k)},x_0,\ldots, x_0) + g(x_0,x_1^{(k)},\ldots, x_n^{(k)}) \\
&\leq g(x_0^{(k)},x_0,\ldots, x_0) + g(x_1,x_1^{(k)},x_1,\ldots, x_1) + g(x_0,x_1, x_2^{(k)}, \ldots, x_n^{(k)}) \\
& \quad \vdots \\
&\leq \sum\limits_{i=0}^{n} g(x_i^{(k)},x_i,\ldots, x_i) + g(x_0,x_1, \ldots, x_n)\\
&< \varepsilon  + g(x_0,x_1, \ldots, x_n).
\end{align*}
In a similar way, we have $g(x_0,x_1, \ldots, x_n) < \varepsilon  + g(x_0^{(k)},x_1^{(k)},\ldots, x_n^{(k)}).$\\
Therefore, $\big|g(x_0^{(k)},x_1^{(k)},\ldots, x_n^{(k)})-g(x_0,x_1, \ldots, x_n) \big| < \varepsilon$ as desired.

\end{proof}

\section{Fixed point theorems}
Fixed point theorems on a $G$-metric space have extensively been studied: see \cite{AKOH15} and references therein.
The interested reader can also refer to \cite{AN20131486, GABA17, GABA18, Khamsi2015}. In this section we generalize several fixed point theorems on the $g$-metric space under the topology established in Section 3 and Section 4.

The following result can be considered as a generalization of the Banach contractive mapping principle with respect to a $g$-metric space.
\begin{theorem}[Banach contractive mapping principle in a $g$-metric space] \label{T:Banach}
Let $(\Omega,g)$ be a complete $g$-metric space and let  $T:\Omega \longrightarrow \Omega$ be a mapping such that
there exists $\lambda \in [0, 1)$ satisfying
\begin{equation}\label{eq:contractive1}
g(T(x_0), T(x_1), \ldots, T(x_n)) \leq \lambda g(x_0, x_1, \ldots, x_n) \quad \text{for all } x_0, \ldots, x_n \in \Omega.
\end{equation}
Then $T$ has a unique fixed point in $\Omega$.
\end{theorem}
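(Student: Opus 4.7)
My plan is to adapt the classical Picard iteration argument to the $g$-metric setting, leveraging the tools developed in Sections 3 and 4. The three steps are (i) show the iteration is $g$-Cauchy, (ii) identify the limit as a fixed point, and (iii) derive uniqueness from the contraction.

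First, I would fix any starting point $x_0 \in \Omega$ and define the iteration $x_{k+1} = T(x_k)$. To show this sequence is $g$-Cauchy, I would exploit Lemma \ref{lemma:g-cauchy}, which reduces the $g$-Cauchy condition to the decay of the ``consecutive'' quantity $g(x_k, x_{k+1}, \ldots, x_{k+1})$. Applying \eqref{eq:contractive1} with the tuple $(x_{k-1}, x_k, \ldots, x_k)$ gives
\[ g(x_k, x_{k+1}, \ldots, x_{k+1}) = g(T(x_{k-1}), T(x_k), \ldots, T(x_k)) \leq \lambda\, g(x_{k-1}, x_k, \ldots, x_k), \]
and iterating this bound yields $g(x_k, x_{k+1}, \ldots, x_{k+1}) \leq \lambda^k\, g(x_0, x_1, \ldots, x_1)$, which tends to $0$ since $\lambda \in [0,1)$. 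Hence $\{x_k\}$ is $g$-Cauchy, and by completeness it $g$-converges to some $x^* \in \Omega$.

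Next, I would verify that $T(x^*) = x^*$. The cleanest route is to apply the triangle inequality $(g4)$ with $s = 0$ and $t = n-1$, using $w = x_k$ as the bridge point, to obtain
\[ g(T(x^*), x^*, \ldots, x^*) \leq g(T(x^*), x_k, \ldots, x_k) + g(x^*, \ldots, x^*, x_k). \]
The first summand equals $g(T(x^*), T(x_{k-1}), \ldots, T(x_{k-1})) \leq \lambda\, g(x^*, x_{k-1}, \ldots, x_{k-1})$ by \eqref{eq:contractive1}, which tends to $0$ by Lemma \ref{lemma:g-convergent}(3) (applied with $s = n$, after a permutation via $(g2)$). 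The second summand is a permutation of $g(x_k, x^*, \ldots, x^*)$ and also tends to $0$ by Lemma \ref{lemma:g-convergent}(3) with $s = 1$. Sending $k \to \infty$ gives $g(T(x^*), x^*, \ldots, x^*) = 0$, and $(g1)$ forces $T(x^*) = x^*$.

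Finally, uniqueness is immediate: if $y^*$ is another fixed point, the contraction hypothesis applied to the tuple $(x^*, y^*, \ldots, y^*)$ yields
\[ g(x^*, y^*, \ldots, y^*) = g(T(x^*), T(y^*), \ldots, T(y^*)) \leq \lambda\, g(x^*, y^*, \ldots, y^*), \]
which forces $g(x^*, y^*, \ldots, y^*) = 0$, and hence $x^* = y^*$ by $(g1)$. I expect the main obstacle to be the bookkeeping in the fixed-point step: one must choose the right decomposition of $(g4)$ with the correct arities so that each of the two resulting pieces can be controlled directly by the available convergence lemmas, since $g$-continuity of $T$ is not assumed and has not been deduced a priori from \eqref{eq:contractive1}.
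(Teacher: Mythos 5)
Your proposal is correct, and its skeleton (Picard iteration, geometric decay of $g(x_k,x_{k+1},\ldots,x_{k+1})$ via \eqref{eq:contractive1}, Lemma \ref{lemma:g-cauchy} to get $g$-Cauchyness, completeness, and the one-line uniqueness argument) coincides with the paper's proof. The one place you genuinely diverge is the identification of the limit as a fixed point. The paper writes $g(y_{k+1},T(y),\ldots,T(y)) \leq \lambda\, g(y_k,y,\ldots,y)$ and then lets $k\to\infty$ on \emph{both} sides, which requires the joint continuity of $g$ in all $n+1$ variables (Lemma \ref{lemma:g-continuous}) to pass to the limit in the first argument on the left. You instead split $g(T(x^*),x^*,\ldots,x^*)$ by $(g4)$ with bridge point $w=x_k$ (arities $s=0$, $t=n-1$), control the first piece by \eqref{eq:contractive1} together with Lemma \ref{lemma:g-convergent}(3) for $s=n$, and the second piece by $(g2)$ and Lemma \ref{lemma:g-convergent}(3) for $s=1$. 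This buys a slightly more self-contained argument: it sidesteps the continuity lemma entirely and uses only the axioms plus the convergence characterization, at the cost of the arity bookkeeping you flag at the end. Both routes are valid; the paper's is shorter once Lemma \ref{lemma:g-continuous} is in hand, while yours makes the fixed-point step independent of that lemma.
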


\begin{proof}
Let $y_0$ be an arbitrary point in $\Omega$.
Set $y_{k+1} = T(y_k)$ for all $k \in \N$.

{\bf{(Existence of a fixed point)}}
If $y_{m+1}=y_m$ for some $m\in \N$, then $y_m$ is a fixed point of $T$.
We assume that $y_{k+1} \neq y_k$ for all $ k \in \N$.
Then, by the condition \eqref{eq:contractive1} it follows that
\begin{equation}\label{eq:contractive-a}
g(y_{k+1},y_{k+2},y_{k+2},\ldots, y_{k+2}) \leq \lambda g(y_{k},y_{k+1},y_{k+1},\ldots, y_{k+1}) \quad \text{for all } k\in \N.
\end{equation}
So, by induction we have $g(y_{k},y_{k+1},y_{k+1},\ldots, y_{k+1}) \leq \lambda^k g(y_{0},y_{1},y_{1},\ldots, y_{1}) $, implying
$$ g(y_{k},y_{k+1},y_{k+1},\ldots, y_{k+1}) \longrightarrow 0 \quad \text{as } k\longrightarrow \infty.$$
Thus, $\{ y_k \}$ is a $g$-Cauchy sequence in $(\Omega,g)$ by Lemma \ref{lemma:g-cauchy}.
Since $(\Omega,g)$ is complete, there exists $y\in \Omega$ such that $\{y_k\} \overset{g}{\longrightarrow} y$.
It follows that
\begin{equation}\label{eq:contractive-b}
g(y_{k+1},T(y),T(y),\ldots,T(y))
\leq \lambda g(y_k, y, y, \ldots, y).
\end{equation}
As $k \longrightarrow \infty$, by Lemma \ref{lemma:g-continuous}
\begin{equation*}
g(y,T(y),T(y),\ldots,T(y))
\leq \lambda g(y, y, y, \ldots, y)=0.
\end{equation*}
Therefore, $T(y) = y$ by the positive definiteness for the $g$-metric.

{\bf{(Uniquness of a fixed point)}} Suppose that $y, \tilde{y}$ are distinct fixed points.
Then
\begin{align*}
g(\tilde{y},y,y,\ldots,y)
& = g(T(\tilde{y}),T(y),T(y),\ldots,T(y)) \\
& \leq \lambda g(\tilde{y},y,y,\ldots,y) \\
& <g(\tilde{y},y,y,\ldots,y),
\end{align*}
which is a contradiction. Thus, $y = \tilde{y}$.
\end{proof}

In fact, a weaker condition than the contractivity condition \eqref{eq:contractive1} can lead to the same conclusion as follows.
\begin{theorem}\label{T:Banach2}
Let $(\Omega,g)$ be a complete $g$-metric space and let  $T:\Omega \longrightarrow \Omega$ be a mapping such that
there exists $\lambda \in [0, 1)$ satisfying either
\begin{itemize}
\item[$(i)$] $g(T(x_0), x_1, \ldots, x_n) \leq \lambda g(x_0, x_1, \ldots, x_n)$ for all  $x_0, \ldots, x_n \in \Omega$
\item[or]
\item[ $(ii)$] $g(T(x), T(y), \ldots, T(y)) \leq \lambda g(x, y, \ldots, y)$  for all $x, y \in \Omega$.
\end{itemize}
Then $T$ has a unique fixed point in $\Omega$.
\end{theorem}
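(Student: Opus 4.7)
The plan is to mirror the proof of Theorem \ref{T:Banach} as closely as possible, using a Picard iteration together with completeness of $(\Omega, g)$. Fix an arbitrary $y_0 \in \Omega$ and set $y_{k+1} = T(y_k)$ for all $k \in \N$. In both cases the strategy is to show $\{y_k\}$ is $g$-Cauchy, invoke completeness to obtain a $g$-limit $y \in \Omega$, verify that $y$ is a fixed point, and conclude uniqueness by a direct comparison.

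Under condition (ii), substituting $x = y_k$ and $y = y_{k+1}$ into the contraction inequality yields
\[
g(y_{k+1}, y_{k+2}, \ldots, y_{k+2}) \leq \lambda\, g(y_k, y_{k+1}, \ldots, y_{k+1}),
\]
so by induction $g(y_k, y_{k+1}, \ldots, y_{k+1}) \leq \lambda^k g(y_0, y_1, \ldots, y_1) \to 0$. Lemma \ref{lemma:g-cauchy}(2) then gives that $\{y_k\}$ is $g$-Cauchy, and by completeness $\{y_k\} \overset{g}{\longrightarrow} y$ for some $y \in \Omega$. Substituting $x = y_k$ and keeping the limit point $y$ in (ii) gives
\[
g(y_{k+1}, T(y), \ldots, T(y)) \leq \lambda\, g(y_k, y, \ldots, y).
\]
Letting $k \to \infty$, the right-hand side tends to $0$ by Lemma \ref{lemma:g-convergent}(3), while the left-hand side tends to $g(y, T(y), \ldots, T(y))$ by the joint continuity of $g$ (Lemma \ref{lemma:g-continuous}); this forces $T(y) = y$ via $(g1)$. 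Uniqueness follows: if $\tilde y$ is another fixed point, applying (ii) with $x = \tilde y$ gives $g(\tilde y, y, \ldots, y) = g(T(\tilde y), T(y), \ldots, T(y)) \leq \lambda\, g(\tilde y, y, \ldots, y)$, which forces $g(\tilde y, y, \ldots, y) = 0$ and hence $\tilde y = y$.

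Under condition (i) the argument is even more direct: setting $x_0 = x_1 = \cdots = x_n = y_0$ gives $g(T(y_0), y_0, \ldots, y_0) \leq \lambda\, g(y_0, \ldots, y_0) = 0$, whence $T(y_0) = y_0$ by $(g1)$, so $y_0$ itself is a fixed point and no Cauchy/completeness machinery is needed. For uniqueness, if $y$ and $\tilde y$ both satisfy $T(y) = y$ and $T(\tilde y) = \tilde y$, then taking $x_0 = y$ and $x_1 = \cdots = x_n = \tilde y$ in (i) yields $g(y, \tilde y, \ldots, \tilde y) = g(T(y), \tilde y, \ldots, \tilde y) \leq \lambda\, g(y, \tilde y, \ldots, \tilde y)$, forcing $y = \tilde y$. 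The main obstacle lies in case (ii): the limit-passing in $g(y_{k+1}, T(y), \ldots, T(y))$ relies crucially on the joint continuity established in Lemma \ref{lemma:g-continuous}, since all remaining steps reduce to direct manipulation of the contraction inequality together with the defining axioms of a $g$-metric.
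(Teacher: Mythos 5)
Your proof is correct. For condition $(ii)$ you take essentially the same route as the paper: the paper's proof just remarks that $(ii)$ supplies exactly the inequalities \eqref{eq:contractive-a} and \eqref{eq:contractive-b} used in the proof of Theorem \ref{T:Banach}, and your Picard-iteration argument (via Lemma \ref{lemma:g-cauchy}, completeness, and Lemma \ref{lemma:g-continuous}) is precisely that proof written out. For condition $(i)$, however, you take a genuinely different and more elementary route. The paper applies $(i)$ once in each of the $n+1$ coordinates, using permutation invariance $(g2)$, to derive the full contractivity hypothesis $g(T(x_0),\ldots,T(x_n)) \leq \lambda^{n+1} g(x_0,\ldots,x_n)$ and then invokes Theorem \ref{T:Banach} wholesale; you instead substitute the diagonal $x_0=\cdots=x_n$ into $(i)$ and conclude from $(g1)$ that $T(x)=x$ for \emph{every} $x\in\Omega$, so no Cauchy or completeness machinery is needed. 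Both arguments are valid, but yours buys an observation the paper's reduction obscures: combining your two steps (every point is fixed, and any two fixed points coincide) shows that hypothesis $(i)$ can only be satisfied when $\Omega$ is a singleton, so part $(i)$ of the theorem is degenerate rather than a genuine weakening of the contraction condition. It would be worth stating that consequence explicitly, since it suggests the intended hypothesis in $(i)$ was probably something else.
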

\begin{proof}
The proof of Theorem \ref{T:Banach2} is the same as the proof of Theorem  \ref{T:Banach}. Note that if the condition $(i)$ holds, then
it follows that
\begin{equation*}
g(T(x_0), T(x_1), \ldots, T(x_n)) \leq \lambda^{n+1} g(x_0, x_1, \ldots, x_n) \quad \text{for all } x_0, \ldots, x_n \in \Omega,
\end{equation*}
which is the contractive condition.
Also, it is noted that the condition $(ii)$ implies the inequalities \eqref{eq:contractive-a} and \eqref{eq:contractive-b}.
\end{proof}

We weaken the contractive conditions based on the notion of weak $\phi$-contractions.
The following are some families of control functions which are involved in establishing fixed point results. For more information about these families, see \cite{AKOH15}.
\begin{align*}
\mathcal{F}_{alt} & = \{ \phi:[0,\infty) \longrightarrow [0,\infty): \phi \text{ is continuous, non-decreasing, and } \phi^{-1}(\{0\})=\{0 \} \}, \\
\mathcal{F}'_{alt} & = \{ \phi:[0,\infty) \longrightarrow [0,\infty): \phi \text{ is lower semi-continuous and } \phi^{-1}(\{0\})=\{0 \} \}, \\
\mathcal{F}_{A} & = \{ \phi:[0,\infty) \longrightarrow [0,\infty): \phi \text{ is non-decreasing and } \phi^{-1}(\{0\})=\{0 \} \}.
\end{align*}

\begin{lemma}\cite[Lemma 2.3.3]{AKOH15}\label{lemma:FA}
If $\phi \in \mathcal{F}_A$ and $\{t_k\}_{k\in \N} \subseteq [0,\infty)$ is a  sequence such that $\phi(t_k) \longrightarrow 0$, then
$t_k \longrightarrow 0$.
\end{lemma}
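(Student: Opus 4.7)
The plan is to argue by contrapositive (equivalently, by contradiction): suppose $t_k \not\to 0$ and derive that $\phi(t_k) \not\to 0$. The only two hypotheses on $\phi$ that I have available are monotonicity and the characterization $\phi^{-1}(\{0\}) = \{0\}$, so the proof must combine these two facts in a natural way.

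First I would negate the conclusion: if $t_k \not\to 0$, then there exists $\varepsilon > 0$ and a subsequence $\{t_{k_j}\}$ with $t_{k_j} \geq \varepsilon$ for every $j$. Next I would apply monotonicity of $\phi$ to get $\phi(t_{k_j}) \geq \phi(\varepsilon)$ for every $j$. The key observation is then that $\phi(\varepsilon) > 0$; this is immediate because $\varepsilon > 0$ forces $\varepsilon \notin \phi^{-1}(\{0\})$ by the defining property of $\mathcal{F}_A$, so $\phi(\varepsilon) \neq 0$, and since $\phi$ takes values in $[0,\infty)$ we conclude $\phi(\varepsilon) > 0$. Setting $\eta := \phi(\varepsilon) > 0$, we have a subsequence $\{\phi(t_{k_j})\}$ bounded below by $\eta$, so $\phi(t_k)$ cannot converge to $0$, contradicting the hypothesis.

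I do not anticipate any real obstacle: continuity of $\phi$ is not assumed (only monotonicity) and is not needed, since the argument proceeds through the inequality $\phi(t_{k_j}) \geq \phi(\varepsilon)$ rather than through passage to the limit inside $\phi$. The statement is essentially a reformulation of the fact that a non-decreasing function which vanishes only at $0$ is bounded below by a positive constant on any interval $[\varepsilon,\infty)$ with $\varepsilon > 0$, and the two-line contradiction above captures exactly this.
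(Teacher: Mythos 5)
Your argument is correct and is exactly the standard proof of this fact: the paper itself gives no proof (it simply cites Lemma 2.3.3 of the reference), and the canonical argument there is the same contradiction via monotonicity and $\phi(\varepsilon)>0$ that you describe. Nothing is missing; in particular you are right that continuity is neither assumed nor needed.
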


\begin{lemma}\cite[Lemma 2.3.6]{AKOH15}\label{lemma:control3}
Let $\psi \in \mathcal{F}_{alt},~ \phi \in \mathcal{F}'_{alt}$ and let $\{t_k \}_{k\in \N} \subseteq [0,\infty)$ be a  sequence such that
\begin{equation}
\psi(t_{k+1}) \leq \psi(t_k) - \phi(t_k) \quad \text{for all } k \in \N.
\end{equation}
Then $t_k \longrightarrow 0$.
\end{lemma}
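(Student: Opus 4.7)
The plan is to control the monotone sequence $\{\psi(t_k)\}$ and then exploit the two defining properties of $\phi\in\mathcal{F}'_{alt}$---lower semi-continuity and $\phi^{-1}(\{0\})=\{0\}$---to force $t_k\to 0$.

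First, since $\phi(t_k)\geq 0$, the sequence $\{\psi(t_k)\}$ is non-increasing and bounded below by $0$, so it converges to some $L\geq 0$. Telescoping the hypothesis over $k=0,\ldots,N-1$ gives
$$\sum_{k=0}^{N-1}\phi(t_k)\leq \psi(t_0)-\psi(t_N)\leq \psi(t_0),$$
whence $\sum_k\phi(t_k)$ converges and in particular $\phi(t_k)\to 0$.

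If $L=0$, the conclusion is essentially immediate: for any $\varepsilon>0$, $\psi(\varepsilon)>0$ by $\psi^{-1}(\{0\})=\{0\}$, and eventually $\psi(t_k)<\psi(\varepsilon)$; the monotonicity of $\psi$ then upgrades this to $t_k<\varepsilon$, so $t_k\to 0$. The substantive case is $L>0$, which I aim to rule out by contradiction. Using continuity and monotonicity of $\psi$ together with $\psi(0)=0<L$, set $a=\inf\{s\geq 0:\psi(s)\geq L\}$; then $a>0$ and $\psi(a)=L$, and non-decreasingness of $\psi$ forces $t_k\geq a>0$ for every $k$. Let $\alpha=\liminf_k t_k\in[a,\infty]$. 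If $\alpha<\infty$, choose a subsequence $t_{k_j}\to\alpha$; lower semi-continuity of $\phi$ yields $\phi(\alpha)\leq\liminf_j\phi(t_{k_j})=0$, so $\alpha=0$, contradicting $\alpha\geq a>0$.

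The principal obstacle is the remaining possibility $\alpha=\infty$, i.e.\ $t_k\to\infty$, where lower semi-continuity supplies no leverage at infinity. I would dispose of it by a monotonicity squeeze: $t_k\to\infty$ with $\psi$ non-decreasing forces $\psi(t_k)\to\sup\psi$; combined with $\psi(t_k)\leq\psi(t_0)$ and $\psi(t_k)\searrow L$, this pins down $\psi(t_k)=\psi(t_0)=L$ for every $k$, so the recurrence collapses to $\phi(t_k)\leq 0$, giving $\phi(t_k)=0$ and hence $t_k=0$---contradicting $t_k\to\infty$. Thus $L>0$ is impossible, and the $L=0$ analysis yields $t_k\to 0$ as required.
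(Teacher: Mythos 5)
Your proof is correct. Note that the paper itself gives no argument here -- the lemma is imported verbatim from \cite[Lemma 2.3.6]{AKOH15} -- so there is no in-paper proof to compare against; what you have written is a valid self-contained substitute. The skeleton (monotonicity of $\{\psi(t_k)\}$, hence convergence to some $L\ge 0$; $\phi(t_k)\to 0$ by telescoping; the $L=0$ case via $\psi(t_k)<\psi(\varepsilon)\Rightarrow t_k<\varepsilon$; the $L>0$ case killed by lower semi-continuity of $\phi$ along a subsequence converging to $\liminf t_k$) is the standard route. The one genuinely delicate point is the one you isolated: since $\psi$ is only non-decreasing (not strictly increasing) and possibly bounded, the inequality $\psi(t_k)\le\psi(t_0)$ does \emph{not} by itself bound $\{t_k\}$, so the possibility $t_k\to\infty$ must be excluded separately -- lower semi-continuity gives no information ``at infinity.'' Your disposal of that case is sound: $t_k\to\infty$ with $\psi$ non-decreasing forces $\psi(t_k)\to\sup\psi$, which together with $\psi(t_k)\searrow L$ pins $\psi(t_k)\equiv L$, hence $\phi(t_k)=0$, hence $t_k=0$, a contradiction. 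Two cosmetic remarks: the identity $\psi(a)=L$ is never used (only $a>0$ and $t_k\ge a$ matter), and $\phi(t_k)\to 0$ follows already from $\phi(t_k)\le\psi(t_k)-\psi(t_{k+1})\to 0$ without invoking convergence of the series; neither affects correctness.
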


We show some fixed point results in $g$-metric spaces with weaker contractivity conditions involving the families of control functions.
\begin{theorem}
Let $(\Omega,g)$ be a complete $g$-metric space and let  $T:\Omega \longrightarrow \Omega$ be a self-mapping.
Assume that there exist two function $\psi \in \mathcal{F}_{alt}$ and $\phi \in  \mathcal{F}'_{alt}$ such that
\begin{equation} \label{eq:contractive-control1}
\psi(g(T(x), T(y), \ldots, T(y))) \leq \psi(g(x, y, \ldots, y)) - \phi(g(x, y, \ldots, y))
\end{equation}
for all $x,y \in \Omega$.
Then $T$ has a unique fixed point in $\Omega$. Furthermore, $T$ is a Picard operator.
\end{theorem}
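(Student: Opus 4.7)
The plan is to adapt the Picard iteration argument of Theorem~\ref{T:Banach} to this weaker control-function setting, replacing the geometric decay by the decay forced by Lemma~\ref{lemma:control3}. First I would fix an arbitrary $y_0 \in \Omega$, set $y_{k+1} = T(y_k)$, and let $t_k := g(y_k, y_{k+1}, \ldots, y_{k+1})$ with $y_{k+1}$ appearing $n$ times. Plugging the two variables $y_k$ and $y_{k+1}$ into \eqref{eq:contractive-control1} yields
$$\psi(t_{k+1}) \leq \psi(t_k) - \phi(t_k) \quad \text{for all } k \in \N,$$
so Lemma~\ref{lemma:control3} forces $t_k \to 0$. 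Then Lemma~\ref{lemma:g-cauchy} promotes this to the $g$-Cauchy property of $\{y_k\}$, and completeness yields some $y \in \Omega$ with $\{y_k\} \overset{g}{\longrightarrow} y$.

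To identify $y$ as a fixed point, I would apply \eqref{eq:contractive-control1} to the variables $y_k$ and $y$, obtaining
$$\psi(g(y_{k+1}, T(y), \ldots, T(y))) \leq \psi(g(y_k, y, \ldots, y)) - \phi(g(y_k, y, \ldots, y)).$$
The joint continuity of $g$ (Lemma~\ref{lemma:g-continuous}), together with the fact that constant sequences trivially $g$-converge by $(g1)$, gives $g(y_k, y, \ldots, y) \longrightarrow 0$ and $g(y_{k+1}, T(y), \ldots, T(y)) \longrightarrow g(y, T(y), \ldots, T(y))$. Using continuity of $\psi$ with $\psi(0) = 0$ and the bound $\phi \geq 0$ to take $\limsup$ on both sides yields $\psi(g(y, T(y), \ldots, T(y))) \leq 0$. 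Since $\psi^{-1}(\{0\}) = \{0\}$ and $g$ is positive definite, this forces $y = T(y)$.

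For uniqueness, I would assume a second fixed point $\tilde y \neq y$ and apply \eqref{eq:contractive-control1} to the variables $y$ and $\tilde y$ (using $T(y) = y$ and $T(\tilde y) = \tilde y$), obtaining
$$\psi(g(y, \tilde y, \ldots, \tilde y)) \leq \psi(g(y, \tilde y, \ldots, \tilde y)) - \phi(g(y, \tilde y, \ldots, \tilde y)),$$
so $\phi(g(y, \tilde y, \ldots, \tilde y)) = 0$ and hence $g(y, \tilde y, \ldots, \tilde y) = 0$, contradicting $y \neq \tilde y$. Since $y_0$ was arbitrary and every orbit converges to this unique fixed point, $T$ is a Picard operator.

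The hard part will be the limiting step in the second paragraph, precisely because $\phi$ is only lower semi-continuous so one cannot pass $\lim$ inside $\phi$ directly; the trick is to use only the non-negativity $\phi \geq 0$ to bound the right-hand side from above by $\psi(g(y_k, y, \ldots, y))$ and then exploit continuity of $\psi$ with $\psi(0)=0$ to drive this bound to zero. A minor bookkeeping point is confirming that Lemma~\ref{lemma:g-continuous} applies to the mixed tuple $(y_{k+1}, T(y), \ldots, T(y))$, where the last $n$ slots hold the constant $T(y)$ and $g$-converge to $T(y)$ trivially.
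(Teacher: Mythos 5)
Your proposal is correct and follows essentially the same route as the paper: Picard iteration, the recursion $\psi(t_{k+1})\leq\psi(t_k)-\phi(t_k)$ combined with Lemma \ref{lemma:control3} and Lemma \ref{lemma:g-cauchy} to get a $g$-Cauchy orbit, then passing to the limit in the contractivity inequality (discarding $\phi\geq 0$) and using joint continuity of $g$ to identify the fixed point, with the same uniqueness argument. The only cosmetic difference is that the paper invokes Lemma \ref{lemma:FA} at the last step where you use continuity of $\psi$ together with $\psi^{-1}(\{0\})=\{0\}$ directly; both are valid.
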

\begin{proof}
Let $x_0$ be an arbitrary point in $\Omega$.
Set $x_{k+1} = T(x_k)$ for all $k \in \N$.

{\bf{(Existence of a fixed point)}}
If $x_{m+1}=x_m$ for some $m\in \N$, then $x_m$ is a fixed point of $T$.
We assume that $x_{k+1} \neq x_k$ for all $ k \in \N$.

Then, by the condition \eqref{eq:contractive-control1} it follows that for all $k\in \N$,
\begin{align*}
& \quad \psi(g(x_ {k+1}, x_ {k+2}, \ldots, x_ {k+2}))\\
&= \psi(g(T(x_k), T(x_{k+1}), \ldots, T(x_{k+1})) \\
& \leq \psi(g(T(x_k), T(x_{k+1}), \ldots, T(x_{k+1})) - \phi(g(T(x_k), T(x_{k+1}), \ldots, T(x_{k+1})).
\end{align*}
By Lemma \ref{lemma:control3}, it follows that
\begin{equation}
\lim_{k\rightarrow \infty} g(x_ {k+1}, x_ {k+2}, \ldots, x_ {k+2}) = 0.
\end{equation}
By Lemma \ref{lemma:g-cauchy}, $\{x_k\}_{k\in \N}$ is a $g$-Cauchy sequence.
Since $(\Omega,g)$ is complete, there exists $x \in \Omega$ such that $\{x_k\} \overset{g}{\longrightarrow} x$.
It follows that
\begin{align*}
\psi(g(x_{k+1},T(x),T(x),\ldots,T(x)))
&= \psi(g(T(x_k), T(x), T(x), \ldots, T(x))) \\
&\leq \psi(g(x_k, x, \ldots, x)) - \phi(g(x_k, x, \ldots, x))\\
& \leq \psi(g(x_k, x, \ldots, x)).
\end{align*}
Taking the limit as $k \longrightarrow \infty$ on the both sides, by the continuity of $g$ and $\psi$, we have
\begin{align*}
\lim_{k \rightarrow \infty} \psi(g(x_{k+1},T(x),T(x),\ldots,T(x)))
& \leq \lim_{k \rightarrow \infty} \psi(g(x_k, x, \ldots, x)) \\
& = \psi(g(x, x, \ldots, x))=0.
\end{align*}
By Lemma \ref{lemma:FA} and the continuity of $g$, we have $g(x,T(x),T(x),\ldots,T(x))=0$.
Therefore, $T(x) = x$ by the condition $(g1)$.

{\bf{(Uniquness of a fixed point)}} Suppose that $x, \tilde{x}$ are distinct fixed points.
Then $g(\tilde{x}, x, \ldots, x)>0$. Since $\phi \in \mathcal{F}'_{alt}$, it holds that $\phi (g(\tilde{x}, x, \ldots, x))>0$.
By the contractivity condition, we have that
\begin{align*}
\psi(g(\tilde{x},x,x,\ldots,x))
& = \psi(g(T(\tilde{x}),T(x),T(x),\ldots,T(x))) \\
& \leq \psi(g(\tilde{x}, x, \ldots, x)) - \phi(g(\tilde{x}, x, \ldots, x))\\
& < \psi(g(\tilde{x},x,x,\ldots,x)),
\end{align*}
which is a contradiction. Thus, $x = \tilde{x}$.
\end{proof}

\begin{definition}
Let $(\Omega, g)$ be a $g$-metric space. A mapping $T: \Omega \to \Omega$ is said to be \emph{weak $g$-contractive} if
\begin{equation*}
g(T(x_0), T(x_1), \ldots, T(x_n)) < g(x_0, x_1, \ldots, x_n)
\end{equation*}
for which any two of $x_0, \ldots, x_n \in \Omega$ are distinct.
\end{definition}

\begin{proposition}\label{prop:continuity}
Let $(\Omega, g)$ be a $g$-metric space. Suppose that $T: \Omega \longrightarrow \Omega$ is a weak $g$-contractive function. Then the function $f: \Omega \longrightarrow \mathbb{R}_{+}$ given by $f(x) = g(x, T(x), \ldots, T(x))$ is continuous.
\end{proposition}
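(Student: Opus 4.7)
The plan is to prove continuity of $f$ sequentially. Fix a point $x \in \Omega$ and let $\{x_k\} \subseteq \Omega$ be any sequence with $\{x_k\} \overset{g}{\longrightarrow} x$; it suffices to show that $f(x_k) \longrightarrow f(x)$ in $\mathbb{R}_{+}$. The strategy splits naturally into two steps: first derive the $g$-continuity of $T$ from the weak $g$-contractivity, and then invoke the joint continuity of the $g$-metric function established in Lemma \ref{lemma:g-continuous}.

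For the first step, I would apply the weak $g$-contractive inequality with the tuple $(x_k, x, x, \ldots, x)$. If $x_k = x$, then $T(x_k) = T(x)$ and $g(T(x_k), T(x), \ldots, T(x)) = 0$. Otherwise two of the entries are distinct, so
\[
g(T(x_k), T(x), \ldots, T(x)) < g(x_k, x, \ldots, x).
\]
Since $\{x_k\} \overset{g}{\longrightarrow} x$ gives $g(x_k, x, \ldots, x) \longrightarrow 0$ by Lemma \ref{lemma:g-convergent}(3) with $s=1$, the same lemma in the reverse direction yields $\{T(x_k)\} \overset{g}{\longrightarrow} T(x)$.

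For the second step, I would apply Lemma \ref{lemma:g-continuous} to the $(n+1)$-tuple of sequences where the first coordinate is $\{x_k\}$ and the remaining $n$ coordinates are all copies of $\{T(x_k)\}$. Since $\{x_k\} \overset{g}{\longrightarrow} x$ and $\{T(x_k)\} \overset{g}{\longrightarrow} T(x)$, the joint continuity of $g$ in all $n+1$ variables gives
\[
f(x_k) = g(x_k, T(x_k), \ldots, T(x_k)) \longrightarrow g(x, T(x), \ldots, T(x)) = f(x),
\]
which is the desired continuity.

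There is not really a hard step here once Lemmas \ref{lemma:g-convergent} and \ref{lemma:g-continuous} are in hand; the only subtlety to address cleanly is the fact that ``weak $g$-contractive'' only provides the strict inequality when at least two entries differ, so one must separately observe the trivial case $x_k = x$ when passing from the inequality to the conclusion $T(x_k) \overset{g}{\longrightarrow} T(x)$. Beyond that, the proof is a direct chaining of the two lemmas.
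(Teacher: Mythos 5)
Your proof is correct, but it takes a genuinely different route from the paper's. The paper argues directly with an explicit $\varepsilon$--$\delta$ estimate: it sets $\delta = \varepsilon/(n+1)$ and bounds $|f(x)-f(y)|$ for $y \in B_g(x,\delta)$ by chaining Theorem \ref{thm:g-metric:basic-properties} (2) twice, the weak $g$-contractivity applied to the tuple $(y,x,\ldots,x)$, and Theorem \ref{thm:g-metric:basic-properties} (3), arriving at $|f(x)-f(y)| \leq (1+n)\,g(x,y,\ldots,y) < \varepsilon$. You instead factor the argument through two intermediate facts: the $g$-continuity of $T$ itself (extracted from the contractive inequality on $(x_k,x,\ldots,x)$, exactly the same application of weak contractivity the paper makes, including the same handling of the degenerate case $x_k = x$), and the joint continuity of $g$ from Lemma \ref{lemma:g-continuous}. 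Both are valid; your version is conceptually cleaner and isolates the reusable fact that a weak $g$-contraction is $g$-continuous, while the paper's version is self-contained, avoids the heavier Lemma \ref{lemma:g-continuous}, and produces an explicit modulus of continuity. The one step you should make explicit is the reduction of continuity to sequential continuity: this needs either the first countability of the $g$-metric topology (the balls $B_g(x,1/m)$ form a countable local base) or an appeal to the sequential characterization in the spirit of Proposition 4.7, viewing $\mathbb{R}_+$ with the absolute-value metric as a $g$-metric space of order $1$ via Theorem \ref{thm:relation} (1). This is routine and easily supplied, so it is a presentational omission rather than a genuine gap.
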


\begin{proof}
Let $x \in \Omega.$ We need to show that for any $\varepsilon > 0$ there exists $\delta > 0$ such that
$\big| f(B_{g}(x, \delta)) - g(x, T(x), \ldots, T(x)) \big| < \varepsilon$. We let $\displaystyle{\delta = \frac{\varepsilon}{n+1}}$. For $y \in B_{g}(x, \delta),$ we first assume that $g(y, T(y), \ldots, T(y)) \leq g(x, T(x), \ldots, T(x)).$ Then
\begin{flalign*}
&|g(x, T(x), \ldots, T(x))-g(y, T(y), \ldots, T(y))|\\
&= g(x, T(x), \ldots, T(x))- g(y, T(y), \ldots, T(y))\\
& \leq g(x, y, \ldots, y) + g(y, T(x), \ldots, T(x))- g(y, T(y), \ldots, T(y)) \quad \text{(by Theorem \ref{thm:g-metric:basic-properties} (2))}\\
& \leq g(x, y, \ldots, y) + g(T(y), T(x), \ldots, T(x)) \quad \text{(by Theorem \ref{thm:g-metric:basic-properties} (2))}\\
& \leq g(x, y, \ldots, y) + g(y, x, \ldots, x) \quad \text{(by the weak $g$-contractivity of $T$)}\\
& \leq g(x, y, \ldots, y) + n g(x, y, \ldots, y) \quad \text{(by Theorem \ref{thm:g-metric:basic-properties} (3))}\\
&< (1+n)\delta = \varepsilon
\end{flalign*}
In a similar way, it can be proved that $|g(x, T(x), \ldots, T(x))-g(y, T(y), \ldots, T(y))| < \varepsilon$ holds when $g(y, T(y), \ldots, T(y)) \geq g(x, T(x), \ldots, T(x)).$ Hence, $f$ is continuous.
\end{proof}

\begin{theorem}
Let $T$ be a weak $g$-contractive mapping on a $g$-compact $g$-metric space $(\Omega, g)$. Then $T$ has a unique fixed point.
\end{theorem}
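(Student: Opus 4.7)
The plan is to mimic the classical argument that a weak contraction on a compact metric space has a fixed point, transferred to the $g$-metric setting by means of the auxiliary function $f(x) = g(x, T(x), \ldots, T(x))$ already shown to be continuous in Proposition \ref{prop:continuity}. First I would observe that $g$-compactness (completeness plus total $g$-boundedness) together with the topological equivalence between $(\Omega, g)$ and the ordinary metric space $(\Omega, d)$ with $d(x,y) = g(x,y,\ldots,y) + g(y,x,\ldots,x)$ established in Section 3 implies that $(\Omega, d)$ is a compact metric space. Hence every continuous real-valued function on $\Omega$ attains its infimum, and in particular the continuous function $f$ does so at some point $x^{*} \in \Omega$.

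Next I would show that $f(x^{*}) = 0$ by contradiction. Suppose $f(x^{*}) > 0$; then by $(g1)$ the points $x^{*}$ and $T(x^{*})$ are distinct. Applying the weak $g$-contractivity to the tuple with $x_{0} = x^{*}$ and $x_{1} = \cdots = x_{n} = T(x^{*})$ (which contains two distinct entries, so the strict inequality is valid) yields
\begin{equation*}
f(T(x^{*})) = g(T(x^{*}), T(T(x^{*})), \ldots, T(T(x^{*}))) < g(x^{*}, T(x^{*}), \ldots, T(x^{*})) = f(x^{*}),
\end{equation*}
which contradicts the minimality of $f(x^{*})$. Therefore $f(x^{*}) = 0$, and by $(g1)$ this forces $T(x^{*}) = x^{*}$.

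Uniqueness is immediate: if $x$ and $y$ were two distinct fixed points, then weak $g$-contractivity applied to $(x, y, \ldots, y)$ gives
\begin{equation*}
g(x, y, \ldots, y) = g(T(x), T(y), \ldots, T(y)) < g(x, y, \ldots, y),
\end{equation*}
a contradiction.

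The main obstacle I anticipate is the first step, namely guaranteeing that the continuous function $f$ actually attains its infimum on a $g$-compact space. The cleanest route is to invoke the topological equivalence of Section 3 to reduce to the classical extreme value theorem on a compact metric space; alternatively one can argue directly by extracting a minimizing sequence $\{x_{k}\}$ with $f(x_{k}) \to \inf f$, using total $g$-boundedness to produce a $g$-Cauchy subsequence, completeness to obtain a $g$-limit $x^{*}$, and then the continuity of $f$ (Proposition \ref{prop:continuity}) to conclude $f(x^{*}) = \inf f$. Either approach is routine once the groundwork of Sections 3 and 4 is in place, so the remainder of the argument reduces to the two-line contradictions above.
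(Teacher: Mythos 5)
Your proposal is correct and follows essentially the same route as the paper: minimize the continuous function $f(x) = g(x, T(x), \ldots, T(x))$ over the $g$-compact space and derive a contradiction from weak $g$-contractivity if the minimizer is not fixed, with the same one-line uniqueness argument. The only difference is that you explicitly justify the attainment of the infimum (via the metric $d$ of Section 3 or a minimizing-sequence argument), a step the paper simply asserts from $g$-compactness.
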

\begin{proof}
By Proposition \ref{prop:continuity} the function $f: \Omega \ni x \mapsto g(x, T(x), \dots, T(x)) \in \mathbb{R}_{+}$ is continuous. Since $\Omega$ is $g$-compact, the continuous function $f$ attains its minimum at some $\bar{x} \in \Omega$. If $\bar{x} \neq T(\bar{x})$, then
\begin{displaymath}
\begin{split}
g(\bar{x}, T(\bar{x}), \ldots, T(\bar{x})) &= \min_{x \in \Omega} g(x, T(x), \dots, T(x)) \\
& \leq g(T(\bar{x}), T(T(\bar{x})), \dots, T(T(\bar{x}))) \\
& < g(\bar{x}, T(\bar{x}), \ldots, T(\bar{x})),
\end{split}
\end{displaymath}
which is a contradiction. So $\bar{x}$ is a fixed point of $T$. The uniqueness argument follows exactly same as in the proof of Theorem \ref{T:Banach}.
\end{proof}

We next generalize \'Ciri\'c fixed point theorem \cite{Cir74} in a $g$-metric space.\\
Let $(\Omega,g)$ be a $g$-metric space and $T: \Omega \longrightarrow \Omega$ a map.
For each $x \in \Omega$, we denote
$$O(x, N)  = \{x,~T(x),~ T^{2}(x), \ldots ,~T^{N}(x)\} \text{~and~} O(x, \infty)  = \{x,~T(x),~T^{2}(x),\ldots\} ,$$
where $T^{k+1}=T \circ T^k$ for all $k\in \N$ and $T^{0}$ is the identity mapping on $\Omega$.

\begin{definition}
\begin{itemize}
\item[(1)] A $g$-metric space $\Omega$ is said to be \emph{$T$-orbitally $g$-complete} if every $g$-Cauchy sequence contained in $O(x, \infty)$ for some $x \in \Omega$ is $g$-convergent in $\Omega.$
\item[(2)] A mapping $T: \Omega \longrightarrow \Omega$ is called a \emph{$g$-quasi-contraction} if there exists $\lambda \in [0, 1)$ such that for all $x_0,\ldots,x_{n} \in \Omega.$
\begin{align*}
g(T(x_{0}),\ldots,T(x_{n}))
 \leq &~ \frac{\lambda}{n} \max \Big[\{g(x_{0},\ldots,x_{n})\} \\
& \cup \{g(x_{i},T(x_{j}),\ldots,T(x_{j})) : i,j = 0,\ldots,n\} \Big].
\end{align*}
\end{itemize}
\end{definition}
For $A \subseteq \Omega,$ we denote $\hbox{sup}\{g(a_{0}, \ldots , a_{n}):a_{0},\ldots,a_{n} \in A\}$ by $s(A).$
\begin{lemma}\label{lemma:g-quasi-contraction}
Suppose that $T: \Omega \longrightarrow \Omega$ is a $g$-quasi-contraction on a $g$-metric space $(\Omega,g).$ Then for each $x \in \Omega$ the following inequalities hold.
\begin{itemize}
  \item[(1)] $\displaystyle{ g(T^{k_{0}}(x),\ldots,T^{k_{n}}(x)) \leq \frac{\lambda}{n} s(O(x, N)) }$ for all $k_{0},\ldots,k_{n} \in \{1,\ldots,N\}$.
  \item[(2)] $\displaystyle{ s(O(x, \infty)) \leq \frac{n}{1-\lambda}g(x,T(x),\ldots,T(x)) }$.
\end{itemize}
\end{lemma}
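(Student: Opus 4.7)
The plan for part (1) is to specialize the $g$-quasi-contraction by setting $x_{i}=T^{k_{i}-1}(x)$. The left-hand side then becomes $g(T^{k_{0}}(x),\ldots,T^{k_{n}}(x))$, while every expression under the maximum on the right-hand side, namely $g(T^{k_{0}-1}(x),\ldots,T^{k_{n}-1}(x))$ and each $g(T^{k_{i}-1}(x),T^{k_{j}}(x),\ldots,T^{k_{j}}(x))$, has all entries in $O(x,N)$ and is therefore bounded above by $s(O(x,N))$. Hence the maximum is at most $s(O(x,N))$ and (1) follows.

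For part (2), the key observation is $s(O(x,\infty))=\sup_{N}s(O(x,N))$, so it suffices to establish the stated bound on $s(O(x,N))$ uniformly in $N$. Fix a tuple $T^{m_{0}}(x),\ldots,T^{m_{n}}(x)$ with $m_{i}\in\{0,\ldots,N\}$, and let $k$ be the number of indices at which $m_{i}=0$. When $k=0$, part (1) gives the bound $\tfrac{\lambda}{n}s(O(x,N))$ directly; when $k=n+1$ the tuple is constantly $x$ and $(g1)$ forces $g=0$. In the remaining range $1\le k\le n$, I would apply $(g2)$ to place the zero-indexed entries first and then invoke $(g4)$ with $w=T(x)$ and the split $s=k-1$, $t=n-k$, obtaining
\[
g(T^{m_{0}}(x),\ldots,T^{m_{n}}(x))\le g(\underbrace{x,\ldots,x}_{k},\underbrace{T(x),\ldots,T(x)}_{n+1-k})+g(T^{m_{k}}(x),\ldots,T^{m_{n}}(x),\underbrace{T(x),\ldots,T(x)}_{k}).
\]
Theorem~\ref{thm:g-metric:basic-properties}(3) bounds the first summand by $k\,g(x,T(x),\ldots,T(x))$, while the second summand has all entries of the form $T^{\ell}(x)$ with $\ell\ge 1$, so part (1) bounds it by $\tfrac{\lambda}{n}s(O(x,N))$.

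Combining the cases (and using $k\le n$ throughout), every admissible tuple satisfies $g(T^{m_{0}}(x),\ldots,T^{m_{n}}(x))\le n\,g(x,T(x),\ldots,T(x))+\tfrac{\lambda}{n}s(O(x,N))$. Taking the supremum on the left and rearranging yields $s(O(x,N))\le\tfrac{n^{2}}{n-\lambda}g(x,T(x),\ldots,T(x))$, which is at most $\tfrac{n}{1-\lambda}g(x,T(x),\ldots,T(x))$ because $n\ge 1$ and $\lambda\in[0,1)$ imply $n^{2}(1-\lambda)\le n(n-\lambda)$. Passing to the supremum in $N$ completes (2).

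The main obstacle is that part (1) only controls tuples all of whose arguments lie in the image of $T$, whereas $s(O(x,N))$ involves tuples that may include $x=T^{0}(x)$. The triangle inequality $(g4)$ with the choice $w=T(x)$, together with Theorem~\ref{thm:g-metric:basic-properties}(3), is precisely what converts the remaining $x$-arguments into a multiple of $g(x,T(x),\ldots,T(x))$ plus a term to which (1) applies, closing the self-referential inequality for $s(O(x,N))$.
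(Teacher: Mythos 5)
Your proof is correct, and part (1) is exactly the paper's argument. For part (2) you follow the same overall strategy as the paper --- use the auxiliary point $w=T(x)$ to convert every $x$-entry into a copy of $g(x,T(x),\ldots,T(x))$, absorb the entries that are positive iterates into $\tfrac{\lambda}{n}s(O(x,N))$ via part (1), and solve the resulting self-referential inequality --- but the decomposition is genuinely different. The paper first asserts that the maximum defining $s(O(x,N_0))$ is attained at a tuple of the form $g(x,T^{k_1}(x),\ldots,T^{k_n}(x))$ (this is correct, but it silently uses that a maximizer must contain $x$ as an entry unless $s(O(x,N_0))=0$, by part (1)), and then breaks that single tuple into $n+1$ summands $g(\cdot,T(x),\ldots,T(x))$ using Theorem \ref{thm:g-metric:basic-properties} (4), applying part (1) to each summand $g(T^{k_i}(x),T(x),\ldots,T(x))$ with $k_i\geq 1$ separately; this yields $s\leq j\,g(x,T(x),\ldots,T(x))+(n-j+1)\tfrac{\lambda}{n}s$ with $j$ the number of zero exponents, and hence the constant $j/(1-\tfrac{n-j+1}{n}\lambda)\leq \tfrac{n}{1-\lambda}$. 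You instead bound an \emph{arbitrary} tuple by one application of $(g4)$ splitting it into the block of $x$'s and the block of positive iterates, control the first block by Theorem \ref{thm:g-metric:basic-properties} (3) and the entire second block by a single application of part (1), and only then pass to the supremum; this gives the cruder estimate $s\leq n\,g(x,T(x),\ldots,T(x))+\tfrac{\lambda}{n}s$ and the intermediate constant $\tfrac{n^2}{n-\lambda}$, which you correctly verify is still at most $\tfrac{n}{1-\lambda}$. What your route buys is that it sidesteps the attainment-of-the-maximum step entirely and needs only one invocation of part (1) rather than one per coordinate; what it costs is a slightly weaker (but sufficient) constant before the final comparison.
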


\begin{proof}
(1) Let $x \in \Omega.$ Since $\{T^{k_{0}}(x),T^{k_{0}-1}(x),\ldots,T^{k_{n}}(x),T^{k_{n}-1}(x)\}$ is a subset of $O(x, N)$ and the mapping $T$ is a $g$-quasi-contraction, there exists $\lambda \in [0, 1)$ such that
\begin{align*}
g(T^{k_{0}}(x),\ldots,T^{k_{n}}(x))
&= g(TT^{k_{0}-1}(x),\ldots,TT^{k_{n}-1}(x))\\
& \leq  \frac{\lambda}{n} \hbox{max}\Big[\{g(T^{k_{0}-1}(x),\ldots,T^{k_{n}-1}(x))\}\\
&\quad \cup \{g(T^{k_{i}-1}(x),T^{k_{j}}(x),\ldots,T^{k_{j}}(x)) : i,j = 0,\ldots,n\}\Big]\\
&\leq \frac{\lambda}{n} s(O(x, N)).
\end{align*}

(2) Let $x \in \Omega.$
Since the sequence $\{s(O(x, N))\}_{N\in \N}$ is monotonically increasing, $s(O(x, \infty))=\hbox{sup}\{s(O(x, N)):N \in \mathbb{N}\}.$ For a fixed positive integer $N_{0},$ the statement (1) implies that there exist $k_{1},k_{2},\ldots,k_{n} \in \{0,1,\ldots,N_{0}\}$ such that \\
$g(x,T^{k_{1}}(x), \ldots , T^{k_{n}}(x))=s(O(x, N_{0})).$ Without loss of generality, we can assume that $k_{1} \leq k_{2} \leq \cdots \leq k_{n}.$ If $k_{n}=0$ (i.e. $k_{i}=0$ for all $i$), then $s(O(x, N_{0}))=g(x,x, \ldots , x)=0.$ Suppose that there exists $1 \leq j  \leq n $ such that $k_{j} \neq 0$ and $k_{j-1} = 0.$ Then by Theorem \ref{thm:g-metric:basic-properties} (4) and the statement (1) we have
\begin{align*}
g(x,T^{k_{1}}(x), \ldots , & T^{k_{n}}(x))
\leq g(x,T(x), \ldots, T(x)) + \sum\limits_{i=1}^{n}g(T^{k_{i}}(x),T(x), \ldots, T(x))\\
& = jg(x,T(x), \ldots, T(x)) + \sum\limits_{i=j}^{n}g(T^{k_{i}}(x),T(x), \ldots, T(x))\\
& \leq jg(x,T(x), \ldots, T(x)) + (n-j+1) \frac{\lambda}{n} s(O(x, N_{0}))\\
& = jg(x,T(x), \ldots, T(x)) + (n-j+1) \frac{\lambda}{n} g(x,T^{k_{1}}(x), \ldots , T^{k_{n}}(x))\\
\end{align*}
Thus, it follows that
\begin{align*}
s(O(x, N_{0}))
&=g(x,T^{k_{1}}(x), \ldots , T^{k_{n}}(x)) \\
&\leq \dfrac{j}{1-\frac{n-j+1}{n}\lambda}g(x,T(x), \ldots, T(x)) \\
& \leq \dfrac{n}{1-\lambda}g(x,T(x), \ldots, T(x)).
\end{align*}
Since $N_{0}$ is arbitrary, $s(O(x, \infty)) \leq \dfrac{n}{1-\lambda}g(x,T(x),\ldots,T(x)).$
\end{proof}

\begin{theorem}[\'Ciri\'c fixed point theorem in a $g$-metric space]
Let $\Omega$ be a $g$-metric space. Suppose that $\Omega$ is $T$-orbitally $g$-complete and $T: \Omega \longrightarrow \Omega$ is a $g$-quasi-contraction. Then the following are true.
\begin{itemize}
\item[(1)] $T$ has a unique fixed point $y$ in $\Omega.$
\item[(2)] $\{T^{N}(x)\} \overset{g}{\longrightarrow}  y$ as $N \longrightarrow \infty$.
\item[(3)] $g(T^{N}(x), y, \ldots, y) \leq \dfrac{\lambda^N}{n^{N-1}(1-\lambda)}g(x,T(x),\ldots,T(x)).$
\end{itemize}
\end{theorem}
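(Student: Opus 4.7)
The plan is to iterate $T$ from an arbitrary starting point $x \in \Omega$ and show the resulting orbit is $g$-Cauchy, with its limit providing the unique fixed point. Define the tail diameter
$$D_N := \sup\{g(T^{k_0}(x), \ldots, T^{k_n}(x)) : k_0, \ldots, k_n \geq N\}.$$
By Lemma \ref{lemma:g-quasi-contraction} (2), $D_0 = s(O(x,\infty)) \leq \tfrac{n}{1-\lambda}g(x, T(x), \ldots, T(x)) < \infty$. The key preliminary estimate I would establish is the geometric recursion $D_N \leq \tfrac{\lambda}{n} D_{N-1}$ for $N \geq 1$: given $k_0, \ldots, k_n \geq N$, rewrite $T^{k_i}(x) = T(T^{k_i-1}(x))$ and apply the $g$-quasi-contraction condition; every term in the resulting max has all of its indices at least $N-1$ and is therefore bounded by $D_{N-1}$. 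Iterating yields $D_N \leq (\lambda/n)^N D_0 \leq \tfrac{\lambda^N}{n^{N-1}(1-\lambda)}g(x, T(x), \ldots, T(x))$, which simultaneously forces $D_N \to 0$ and supplies the numerical bound needed in part (3).

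With $D_N \to 0$, Lemma \ref{lemma:g-cauchy} identifies $\{T^N(x)\}$ as $g$-Cauchy; since this sequence lies in $O(x,\infty)$, the $T$-orbital $g$-completeness of $\Omega$ produces some $y \in \Omega$ with $T^N(x) \overset{g}{\longrightarrow} y$, which is (2). To verify $T(y) = y$, I would apply the $g$-quasi-contraction to the tuple $(T^N(x), y, y, \ldots, y)$, yielding
$$g(T^{N+1}(x), T(y), \ldots, T(y)) \leq \tfrac{\lambda}{n} \max \bigl\{g(T^N(x), y, \ldots, y),~ g(T^N(x), T^{N+1}(x), \ldots, T^{N+1}(x)),~ g(y, T^{N+1}(x), \ldots, T^{N+1}(x)),~ g(T^N(x), T(y), \ldots, T(y)),~ g(y, T(y), \ldots, T(y))\bigr\}.$$
Passing $N \to \infty$ and invoking the joint continuity of $g$ from Lemma \ref{lemma:g-continuous}, the first three terms vanish (by the Cauchy property and $g$-convergence), the fourth tends to $g(y, T(y), \ldots, T(y))$, and the left-hand side also tends to $g(y, T(y), \ldots, T(y))$; this gives $g(y, T(y), \ldots, T(y)) \leq \tfrac{\lambda}{n}g(y, T(y), \ldots, T(y))$, forcing this quantity to vanish and hence $T(y) = y$ by (g1).

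For uniqueness, if $z$ is a second fixed point, applying the $g$-quasi-contraction to $(y, z, \ldots, z)$ and to $(z, y, \ldots, y)$ (using $T(y) = y$ and $T(z) = z$) collapses the right-hand max in each case to $M := \max\{g(y, z, \ldots, z),~ g(z, y, \ldots, y)\}$, producing $M \leq (\lambda/n) M$ and hence $M = 0$; thus $y = z$ by (g1). Finally, (3) is extracted from the opening estimate: for each $L \geq N$, $g(T^N(x), T^L(x), \ldots, T^L(x)) \leq D_N$, and letting $L \to \infty$ with joint continuity of $g$ converts this into $g(T^N(x), y, \ldots, y) \leq D_N$, giving the claimed bound.

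The step I expect to require the most care is the verification that $T(y) = y$: the map $T$ is not assumed continuous, so one cannot simply pass $T^{N+1}(x) \to T(y)$, and the quasi-contraction produces a maximum of five distinct terms whose limits must be tracked separately. Recognizing that only $g(T^N(x), T(y), \ldots, T(y))$ and the standing term $g(y, T(y), \ldots, T(y))$ survive in the limit — and that both collapse onto the same quantity — is where the argument must be set up carefully to obtain the self-improving inequality.
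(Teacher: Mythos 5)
Your proposal is correct and runs on the same engine as the paper's proof: Lemma~\ref{lemma:g-quasi-contraction}~(2) to bound $s(O(x,\infty))$, repeated application of the quasi-contraction to gain a factor $\lambda/n$ per step, orbital completeness to produce the limit $y$, and the quasi-contraction again for uniqueness. The differences are in the packaging and are worth noting. Your tail-diameter recursion $D_N \leq \tfrac{\lambda}{n} D_{N-1}$ is a cleaner reorganization of the paper's estimate $g(T^{k_0}(x),\ldots,T^{k_n}(x)) \leq (\lambda/n)^{k_0} s(O(x,k_n))$, and it delivers the quantitative bound of part (3) in one stroke. More substantively, your verification of $T(y)=y$ differs from the paper's: the paper decomposes $g(y,T(y),\ldots,T(y))$ via the triangle inequality using the orbit of $y$ itself and then invokes ``$\{T^N(z)\}\overset{g}{\longrightarrow} y$ for any $z$,'' a statement about \emph{all} orbits sharing the limit $y$ that has not been established at that point in the argument (only the convergence of the orbit of the original $x$ has been shown). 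Your version applies the quasi-contraction to the tuple $(T^N(x),y,\ldots,y)$, tracks the five families of terms in the max, and passes to the limit using the joint continuity of $g$ (Lemma~\ref{lemma:g-continuous}); this stays entirely within the orbit of $x$ and closes that gap. Your uniqueness argument symmetrizes over $(y,z,\ldots,z)$ and $(z,y,\ldots,y)$ to get $M\leq(\lambda/n)M$, whereas the paper uses Theorem~\ref{thm:g-metric:basic-properties}~(3) to compare $g(y,\tilde y,\ldots,\tilde y)$ with $n\,g(\tilde y,y,\ldots,y)$; both are valid. One cosmetic remark: $D_N\to 0$ is already the definition of $g$-Cauchy, so the appeal to Lemma~\ref{lemma:g-cauchy} is not needed there.
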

\begin{proof}
\begin{itemize}
\item[(2)]
Let $x \in \Omega.$ Since $T$ is a $g$-quasi-contraction, by Lemma \ref{lemma:g-quasi-contraction} (1) it follows that
\begin{align*}
g(T^{k_{0}}(x), \ldots, T^{k_{n}}(x))
& = g(TT^{k_{0}-1}(x), T^{k_{1}-k_{0}+1}T^{k_{0}-1}(x), \ldots, T^{k_{n}-k_{0}+1}T^{k_{0}-1}(x))\\
& \leq \frac{\lambda}{n} s(O(T^{k_{0}-1}(x), k_{n}-k_{0}+1))
\end{align*}
for positive integers $k_{0}, k_{1}, \ldots, k_{n}$ with $k_{0} < k_{1} < \cdots < k_{n}$.
By Lemma \ref{lemma:g-quasi-contraction} (1), there exist $\ell_{1}, \ldots, \ell_{n} \in \{0, \ldots, k_{n}-k_{0}+1\}$ (without loss of generality, we assume that $\ell_{1} \leq \cdots \leq \ell_{n}$) such that
\begin{equation*}
s(O(T^{k_{0}-1}(x), k_{n}-k_{0}+1)) = g(T^{k_{0}-1}(x), T^{l_{1}}T^{k_{0}-1}(x), \ldots, T^{l_{n}}T^{k_{0}-1}(x)).
\end{equation*}
Then by Lemma \ref{lemma:g-quasi-contraction} (1), we have
\begin{align*}
 g(T^{k_{0}-1}(x), &T^{\ell_{1}}T^{k_{0}-1}(x), \ldots, T^{\ell_{n}}T^{k_{0}-1}(x))\\
& =g(TT^{k_{0}-2}(x), T^{\ell_{1}+1}T^{k_{0}-2}(x), \ldots, T^{\ell_{n}+1}T^{k_{0}-2}(x)) \\
& \leq \frac{\lambda}{n} s(O(T^{k_{0}-2}(x), \ell_{n}+1)) \leq \frac{\lambda}{n} s(O(T^{k_{0}-2}(x), k_{n}-k_{0}+2)).
\end{align*}
By repeating process, we eventually obtain the following inequalities:
\begin{align*}
g(T^{k_{0}}(x), \ldots, T^{k_{n}}(x))
&\leq \frac{\lambda}{n} s(O(T^{k_{0}-1}(x), k_{n}-k_{0}+1))\\
& \leq \bigg(\frac{\lambda}{n} \bigg)^{2} s(O(T^{k_{0}-2}(x), k_{n}-k_{0}+2))\\
& \quad \vdots\\
& \leq \bigg(\frac{\lambda}{n} \bigg)^{k_{0}} s(O(x, k_{n})).
\end{align*}
Then it follows from Lemma \ref{lemma:g-quasi-contraction} (2) that
\begin{equation} \label{upperbound}
g(T^{k_{0}}(x), \ldots, T^{k_{n}}(x)) \leq \bigg(\frac{\lambda}{n} \bigg)^{k_{0}}\frac{n}{1-\lambda}g(x,T(x),\ldots,T(x)).
\end{equation}
The sequence of iterates $\{T^{N}(x)\}$ is $g$-Cauchy because $\displaystyle{\Big(\frac{\lambda}{n} \Big)^{k_{0}}}$ tends to $0$ as $k_{0} \longrightarrow \infty.$ Therefore, since $\Omega$ is $T$-orbitally $g$-complete, $\{T^{N}(x)\}$ has the $g$-limit $y$ in $\Omega.$

\item[(1)] (\textbf{Existence of a fixed point})
We shall show that the $g$-limit $y$ is a fixed point under $T.$ Let us consider the following inequalities:
 \begin{align*}
g(y, T(y), \ldots, T(y))
& \leq g(y, T^{N+1}(y), \ldots, T^{N+1}(y)) + g(TT^{N}(y), T(y), \ldots, T(y)) \\
& \leq g(y, T^{N+1}(y), \ldots, T^{N+1}(y)) + \frac{\lambda}{n} \hbox{max}\Big\{g(T^{N}(y), y, \ldots, y),\\
& \qquad g(T^{N}(y), T^{N+1}(y), \ldots, T^{N+1}(y)),~ g(y, T(y), \ldots, T(y)),\\
& \qquad g(T^{N}(y), T(y), \ldots, T(y)),~ g(y, T^{N+1}(y), \ldots, T^{N+1}(y)) \Big\} \\
& \leq g(y, T^{N+1}(y), \ldots, T^{N+1}(y)) + \frac{\lambda}{n} \Big(g(T^{N}(y), y, \ldots, y)\\
& \quad +g(T^{N}(y), T^{N+1}(y), \ldots, T^{N+1}(y))~ + g(y, T(y), \ldots, T(y)) \\
& \quad +g(y, T^{N+1}(y), \ldots, T^{N+1}(y)) \Big) \quad \text{(by Theorem \ref{thm:g-metric:basic-properties} (2))}.
\end{align*}
Then for every positive integer $N,$ we have
\begin{align*}
g(y, T(y), \ldots, T(y))
& \leq \frac{\lambda}{n-\lambda} \Big[ g(T^{N}(y), y, \ldots, y) + g(T^{N}(y), T^{N+1}(y), \ldots, T^{N+1}(y))\\
& \qquad + \Big(\frac{n}{\lambda}+1 \Big)g(y, T^{N+1}(y), \ldots, T^{N+1}(y)) \Big].
\end{align*}
Note that for any $x \in \Omega,$ $\{T^{N}(x)\} \overset{g}{\longrightarrow} y.$ Thus $g(y, T(y), \ldots, T(y))=0,$ i.e. $T(y)=y.$ Therefore, $y$ is a fixed point of $T.$

(\textbf{Uniqueness of a fixed point})
Suppose that $y$ and $\widetilde{y}$ are fixed points under $T,$ i.e. $T(y)=y$ and $T(\widetilde{y})=\widetilde{y}.$ The $g$-quasi-contractivity of $T$ gives rise to the following inequality:
\begin{align*}
g(\widetilde{y}, y, \ldots, y)
&= g(T(\widetilde{y}), T(y), \ldots, T(y))\\
& \leq \frac{\lambda}{n} \max\Big\{g(\widetilde{y}, y, \ldots, y), g(\widetilde{y}, T(\widetilde{y}), \ldots, T(\widetilde{y})), g(y, T(y), \ldots, T(y)),\\
& \qquad \qquad \quad  g(\widetilde{y}, T(y), \ldots, T(y)), g(y, T(\widetilde{y}), \ldots, T(\widetilde{y})) \Big\} \\
& \leq \frac{\lambda}{n} \max \Big\{g(\widetilde{y}, y, \ldots, y), g(y, \widetilde{y}, \ldots, \widetilde{y}) \Big\} \\
& \leq \frac{\lambda}{n} \hbox{max}\Big\{g(\widetilde{y}, y, \ldots, y), n g(\widetilde{y}, y, \ldots, y) \Big\} \quad \text{(by Theorem \ref{thm:g-metric:basic-properties} (3))} \\
& = \lambda g(\widetilde{y}, y, \ldots, y).
\end{align*}
Since $0 \leq \lambda <1$, it holds that $g(\widetilde{y}, y, \ldots, y)=0.$ Therefore, $y = \widetilde{y}$ as desired.

\item[(3)] Taking the limit as $k_{1} \longrightarrow \infty$ on the both side of \eqref{upperbound}, we can obtain the inequality
\begin{equation*}
g(T^{k_0}(x), y, \ldots, y) \leq \bigg(\frac{\lambda}{n}\bigg)^{k_0} \bigg( \frac{n}{1-\lambda} \bigg) g(x,T(x),\ldots,T(x)).
\end{equation*}

\end{itemize}

\end{proof}


\section*{Acknowledgments}
The work of S. Kim was supported by the National Research Foundation of Korea (NRF) grant funded by the Korea government (MIST) (No. NRF-2018R1C1B6001394).
This work of H. Choi was partially supported  by Shanghai Sailing Program under Grant 16YF1407700, and National Nature Science Foundation of China (NSFC) under Grant No. 61601290.

\section*{References}
\bibliographystyle{plain}

\end{document}